\newcounter{defen}	 
 \newtheorem{defn}[defen]{Definition}
\theoremstyle{plain}
\newcounter{lem}
  \newtheorem{lemma}[lem]{Lemma}
\newcounter{thm}
\newtheorem{theorem}[thm]{Theorem}
\newcounter{cor}
  \newtheorem{corollary}[cor]{Corollary}
  \theoremstyle{remark}
\newcounter{claimo}
  \newtheorem{claim}[claimo]{Claim}
\begin{document}

\title{Complete metric on mixing actions of general groups\thanks{The work was completed in the framework of the Program for Support
of Leading Scientific Schools of the Russian Federation (grant no.
NS-3038.2008.1) }
}

\author{Tikhonov S.V.}

\email{tikhonovc@mail.ru}



\begin{abstract}
In this paper the metric on the set of mixing actions of a countable
infinite group is introduced so that the corresponding space is complete
and separable.
\end{abstract}
\keywords{monotilable group\and measure preserving transformations\and mixing group actions}
\maketitle
\section{Introduction}
\label{intro}
Let $\mathcal{A}$ be the group of invertible measure preserving transformations
of Lebesgue space $\left(X,\Sigma,\mu\right)$, endowed with weak
topology(see below). Continuous homomorphism of a topological group
$\mathcal{G}$ into $\mathcal{A}$ is called $\mathcal{G}$-action(or
simply, action). 

Measure preserving action $T$ of a locally compact group is mixing if for arbitrary measurable
sets $A$ and \textbf{$B\subset X$} 
\[
\mu\left(T^{g}A\cap B\right)\rightarrow\mu\left(A\right)\mu\left(B\right)\,\text{as}\,\, g\rightarrow\infty;
\]
if $\mathcal{G}$ is finitely generated then one can say about length
of its elements, denoted by $\left|\bullet\right|$, and $g\rightarrow\infty$
iff $\left|g\right|\rightarrow\infty$. 

The aim of the paper is to elaborate certain machinery for study of mixing
$\mathcal{G}$-actions from the point of view going back to the well-known
theorem due to Halmos and Rokhlin about conjugacy class of aperiodic transformation. 

This question for $\mathbb{Z}$-actions was considered in \cite{TikhonovMSB2}. 

Here the general case of countable infinite groups and their actions
is considered:

--- the metric on the set of mixing $\mathcal{G}$-actions is introduced
so that the corresponding space $\mathcal{M}_{\mathcal{G}}$ is complete
and separable,

--- it is shown that for generic action its conjugates are everywhere
dense in this space. 

Moreover, if $\mathcal{G}$ is finitely generated infinite monotilable
amenable group then conjugates to Cartesian product of two free mixing
actions \emph{are everywhere dense in $\mathcal{M}_{\mathcal{G}}$.}

\section{The metric}
\label{sec:1}

Let $\left(X,\Sigma,\mu\right)$ be separable Lebesgue space and $\left\{ A_{i}\right\} $
be a countable collection of sets generating the $\sigma$-algebra
$\Sigma$. Measure $\mu$ is supposed to be normalized and continuous.
All the spaces with these properties are isomorphic to each other;
in particular such a space is isomorphic to its Cartesian square.
The set $\mathcal{A}$ of invertible measure preserving transformations
$X$ is separable with respect to \emph{weak topology} defined by
the base of neighbourhoods 
\[
\mathcal{U}\left(T,q,\varepsilon\right)=\left\{ S\mid\forall A,B\in q:\left|\mu\left(TA\cap B\right)-\mu\left(SA\cap B\right)\right|<\varepsilon\right\} ,
\]
(here and below $q$ stand for finite subset of $\left\{ A_{i}\right\} $).
Moreover $\mathcal{A}$ is a topological group (i.e. multiplication
and inversion are continuous). The weak topology is generated by any
of the metrics 
\[
d\left(T,S\right)=\sum_{i\in\mathbb{N}}\frac{1}{2^{i}}\left(\mu\left(TA_{i}\bigtriangleup SA_{i}\right)+\mu\left(T^{-1}A_{i}\bigtriangleup S^{-1}A_{i}\right)\right),
\]

and 
\[
a\left(T,S\right)=\sum_{i,j\in\mathbb{N}}\frac{1}{2^{i+j}}\left|\mu\left(TA_{i}\cap A_{j}\right)-\mu\left(SA_{i}\cap A_{j}\right)\right|.
\]

Metric $d$ is more preferable in the sense that the space $\mathcal{A}$
is complete%
\footnote{To speak about completeness it is necessary to identify transformations
(or $\mathcal{G}$-actions that will be the main object below), coinciding
mod 0. %
} with respect to $d$; it is called metric of weak topology. This
metric can be naturally generalized to the case of group actions.
Let $\mathcal{G}$ be countable  group; the set $\mathcal{A}_{\mathcal{G}}$
of $\mathcal{G}$-actions by measure preserving transformations is
a complete separable metric space with respect to metric 
\[
d\left(T,S\right)=\sum_{g\in\mathcal{G}}\frac{1}{2^{\left|g\right|}}d\left(T^{g},S^{g}\right),
\]
where $\left\{ \left|g\right|\right\} _{g\in\mathcal{G}}$ is collection
of positive numbers satisfying the condition $\sum_{g\in\mathcal{G}}\frac{1}{2^{\left|g\right|}}<\infty$. 

Action $T$ of a group $\mathcal{G}$ is called \emph{mixing}, if for
each pair $A,B\in\Sigma$, one has 
\[
\mu\left(T^{g}A\cap B\right)\rightarrow\mu\left(A\right)\mu\left(B\right)
\]
 as $\left|g\right|\rightarrow\infty$. 

The set $\mathcal{M_{G}}$ of mixing actions of $\mathcal{G}$ is
endowed by \emph{lead metric} 
\begin{equation}
m\left(T,S\right)=d\left(T,S\right)+\sup_{g\in\mathcal{G}}a\left(T^{g},S^{g}\right).\label{eq:TheMetrix}
\end{equation}

If $\mathcal{G}=\mathbb{Z}$ the metric $m$ coincides with complete
separable metric introduced in \cite{TikhonovMSB2}. Metric $m\left(T,S\right)$
can be considered on the set $\mathcal{A}_{\mathcal{G}}$, however
it is not separable (even in the case $\mathcal{G}=\mathbb{Z}$).
\begin{lemma}
\label{lem:Topology}Metrics $m$ and $w,w=\sup_{g\in\mathcal{G}}a\left(T^{g},S^{g}\right)$
generate in $\mathcal{M_{G}}$ the same topology with the base of
neighbourhoods 
\[
\mathcal{Q}\left(T,q,\varepsilon\right)=\left\{ S\mid\forall A,B\in q,\sup_{g\in\mathcal{G}}\left|\mu\left(T^{g}A\cap B\right)-\mu\left(S^{g}A\cap B\right)\right|<\varepsilon\right\} ,
\]
 where $T\in\mathcal{M_{G}}$, $q\subset\left\{ A_{i}\right\} $ is
finite and $\varepsilon\in\mathbb{R}^{+}\setminus\left\{ 0\right\} $.\end{lemma}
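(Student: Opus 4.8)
The plan is to prove the two assertions in turn: first that the supremum metric $w$ and the neighbourhood system $\mathcal{Q}$ define one and the same topology, and then that adjoining the summable term $d$ to $w$ changes nothing. For the first part I would check that at every centre $T$ the $w$-balls and the sets $\mathcal{Q}(T,q,\varepsilon)$ are mutually cofinal neighbourhood bases. In one direction, if $w(T,S)<\delta$ then for any $A_i,A_j\in q$ and every $g$ the single term $\frac{1}{2^{i+j}}\left|\mu(T^gA_i\cap A_j)-\mu(S^gA_i\cap A_j)\right|$ is dominated by $a(T^g,S^g)\le w(T,S)$, so $\sup_g\left|\mu(T^gA_i\cap A_j)-\mu(S^gA_i\cap A_j)\right|\le 2^{i+j}\delta$; taking $\delta$ small relative to $2^{-2M}\varepsilon$, where $M$ bounds the indices occurring in $q$, places $S$ in $\mathcal{Q}(T,q,\varepsilon)$. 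Conversely, given $\varepsilon$ I would choose $q=\{A_1,\dots,A_N\}$ and split the double series for $a(T^g,S^g)$ into the block $i,j\le N$ and its complement: membership in $\mathcal{Q}(T,q,\delta)$ bounds the first block by $\delta\sum_{i,j\le N}2^{-(i+j)}\le\delta$, while the tail is at most $\sum_{\max(i,j)>N}2^{-(i+j)}$, a quantity independent of $g$ that tends to $0$ as $N\to\infty$. Since both bounds are uniform in $g$, they bound $w(T,S)$, yielding $\mathcal{Q}(T,q,\delta)\subseteq\{S:w(T,S)<\varepsilon\}$.

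For the second part, note that $m=d+w\ge w$ makes the $m$-topology automatically finer than the $w$-topology, so only the reverse inclusion needs work: I must show that $w(T,S)$ small forces $d(T,S)$ small, i.e. that the identity map from $(\mathcal{M_{G}},w)$ to $(\mathcal{M_{G}},d)$ is continuous at each $T$. This is the heart of the argument and the main obstacle, because $d(T,S)=\sum_g 2^{-|g|}d(T^g,S^g)$ involves all of $\mathcal{G}$, whereas $w$ supplies only the term-by-term control $a(T^g,S^g)\le w(T,S)$ with a modulus that a priori might degenerate as $g$ varies. The resolution exploits the two features built into the definitions: the weights are summable, and each inner distance is uniformly bounded, $d(T^g,S^g)\le C$ for a fixed constant $C$ (every symmetric-difference term being at most $2$).

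Concretely, fix $\varepsilon>0$. Using $\sum_g 2^{-|g|}<\infty$, choose $R$ so large that $C\sum_{|g|>R}2^{-|g|}<\varepsilon/4$; summability also forces the set $F=\{g:|g|\le R\}$ to be finite, since otherwise $\sum_{g\in F}2^{-|g|}\ge\sum_{g\in F}2^{-R}=\infty$. For each of the finitely many $g\in F$ I invoke the fact, recorded in the excerpt, that $a$ and $d$ generate the \emph{same} weak topology on $\mathcal{A}$: continuity of the identity at the point $T^g$ yields $\delta_g>0$ with $a(T^g,P)<\delta_g\Rightarrow d(T^g,P)<\eta$ for every $P\in\mathcal{A}$, where $\eta$ is fixed in advance so that $\eta\sum_{g\in F}2^{-|g|}<\varepsilon/4$. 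Setting $\delta=\min\bigl(\varepsilon/2,\min_{g\in F}\delta_g\bigr)>0$ and assuming $w(T,S)<\delta$, I obtain $a(T^g,S^g)\le w(T,S)<\delta_g$, hence $d(T^g,S^g)<\eta$, for each $g\in F$; summing the head over $F$ and the tail via the uniform bound $C$ gives $d(T,S)<\varepsilon/4+\varepsilon/4=\varepsilon/2$, whence $m(T,S)=d(T,S)+w(T,S)<\varepsilon/2+\delta\le\varepsilon$. This yields the reverse inclusion, so $m$ and $w$ are topologically equivalent and, by the first part, share the base $\mathcal{Q}$. The one point demanding care is that the per-element modulus $\delta_g$ is used for finitely many $g$ only, so that $\delta=\min_{g\in F}\delta_g$ stays positive; the summability of the weights $2^{-|g|}$ is precisely what legitimises this truncation to a finite set.
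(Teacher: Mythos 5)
Your proposal is correct and follows essentially the same route as the paper: the mutual refinement of $w$-balls and the sets $\mathcal{Q}\left(T,q,\varepsilon\right)$ is established by the identical block/tail splitting of the double series for $a$ in one direction and the identical single-term domination $\frac{1}{2^{i+j}}\left|\cdot\right|\leqslant a\left(T^{g},S^{g}\right)$ in the other, while the equivalence of $m$ and $w$ rests on the same cited fact that $d$ and $a$ generate one topology on $\mathcal{A}$. The only difference is one of detail: the paper dispatches that last equivalence in a single sentence, whereas you write out the standard finite-truncation argument (head over the finite set $\left\{ g:\left|g\right|\leqslant R\right\}$ via the per-element moduli $\delta_{g}$, tail via summability of the weights), which is exactly the deduction the paper leaves implicit.
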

\begin{proof}
Coincidence of topologies follows from the fact that metrics $d$
and $a$ generate the same topology (see \cite{TikhonovMSB2}). Now we
show that sets $\mathcal{Q}\left(T,q,\varepsilon\right)$ form a base
of topology generated by $w$. 

1. In every $\left(w,\delta\right)$-neighbourhood of the action $T$
there is a neighbourhood $\mathcal{Q}\left(T,q,\frac{\delta}{4}\right)$,
where $q$ is such that $2\sum_{A_{i}\notin q}\frac{1}{2^{i}}<\frac{\delta}{2}$.
In fact, one has 
\[
S\in\mathcal{Q}\left(T,q,\frac{\delta}{4}\right)\Rightarrow\forall g:a\left(T^{g},S^{g}\right)=\sum_{i,j\in\mathbb{N}}\frac{1}{2^{i+j}}\left|\mu\left(T^{g}A_{i}\cap A_{j}\right)-\mu\left(S^{g}A_{i}\cap A_{j}\right)\right|\leqslant
\]
\[
\leqslant\sum_{A_{i},A_{j}\in q}\frac{1}{2^{i+j}}\left|\mu\left(T^{g}A_{i}\cap A_{j}\right)-\mu\left(S^{g}A_{i}\cap A_{j}\right)\right|+\sum_{A_{i}\notin q}\frac{1}{2^{i}}+\sum_{A_{j}\notin q}\frac{1}{2^{j}}\leqslant
\]
\[
\leqslant\sum_{A_{i},A_{j}\in q}\frac{1}{2^{i+j}}\left|\mu\left(T^{g}A_{i}\cap A_{j}\right)-\mu\left(S^{g}A_{i}\cap A_{j}\right)\right|+\frac{\delta}{2}\leqslant\frac{3\delta}{4}.
\]

Since this is true for any $g$, 
\[
w\left(T,S\right)=\sup_{g\in\mathcal{G}}a\left(T^{g},S^{g}\right)<\delta.
\]

2. In every set $\mathcal{Q}\left(T,q,\varepsilon\right)$ there is
$\left(w,\frac{\varepsilon}{2^{2l}}\right)$-neighbourhood of $T$,
where $l$ is the maximal number of elements from $q$. Then for $A_{i},A_{j}\in q$,
one has

\[
\frac{1}{2^{2l}}\left|\mu\left(S^{g}A_{i}\cap A_{j}\right)-\mu\left(T^{g}A_{i}\cap A_{j}\right)\right|\leqslant\frac{1}{2^{i+j}}\left|\mu\left(S^{g}A_{i}\cap A_{j}\right)-\mu\left(T^{g}A_{i}\cap A_{j}\right)\right|\leqslant
\]
\[
\leqslant\sum_{i,j\in\mathbb{N}}\frac{1}{2^{i+j}}\left|\mu\left(S^{g}A_{i}\cap A_{j}\right)-\mu\left(T^{g}A_{i}\cap A_{j}\right)\right|=a\left(S^{g},T^{g}\right).
\]
Hence for each $S$ such that $w\left(T,S\right)<\frac{\varepsilon}{2^{2l}}$,
every pair $A,B\in q$ and every $g\in\mathcal{G}$ the inequality
\[
\sup_{g\in\mathcal{G}}\left|\mu\left(T^{g}A\cap B\right)-\mu\left(S^{g}A\cap B\right)\right|\leqslant2^{2l}a\left(S^{g},T^{g}\right)<\varepsilon,
\]
holds and therefore $S\in\mathcal{Q}\left(T,q,\varepsilon\right)$.\end{proof}
\begin{lemma}
The space $\mathcal{M_{G}}$ is separable.\end{lemma}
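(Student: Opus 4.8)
The plan is to exploit the one feature that separates mixing actions from arbitrary ones: for a mixing $T$ the numbers $\mu(T^{g}A_{i}\cap A_{j})$ converge, as $|g|\to\infty$, to the fixed limit $\mu(A_{i})\mu(A_{j})$, \emph{independent of} $T$. The $\sup_{g}$ in $w$ makes the topology behave like a non-separable $\ell^{\infty}$-type norm on all of $\mathcal{A}_{\mathcal{G}}$ (this is the reason $m$ is not separable there), but on $\mathcal{M}_{\mathcal{G}}$ the relevant functions live in a separable space of sequences converging at infinity. I would make this precise through an isometric embedding. By Lemma \ref{lem:Topology} it suffices to treat the topology generated by $w$.

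First I would set up the coordinates. For $T\in\mathcal{M}_{\mathcal{G}}$ and $i,j\in\mathbb{N}$ put $f^{T}_{ij}(g)=\mu(T^{g}A_{i}\cap A_{j})$, a function $\mathcal{G}\to[0,1]$. Since $\sum_{g}2^{-|g|}<\infty$, each set $\{g:|g|\leqslant N\}$ is finite, so ``$|g|\to\infty$'' is a genuine limit along the length filter, and mixing of $T$ says precisely that $f^{T}_{ij}(g)\to\mu(A_{i})\mu(A_{j})$ as $|g|\to\infty$. Hence $f^{T}_{ij}$ lies in the space $\mathcal{C}_{ij}$ of all functions $\mathcal{G}\to[0,1]$ tending to the fixed value $\mu(A_{i})\mu(A_{j})$ at infinity, equipped with the sup-norm $\|\cdot\|_{\infty}$.

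Next I would check that each $\mathcal{C}_{ij}$ is separable: given $f\in\mathcal{C}_{ij}$ and $\varepsilon>0$, choose $N$ with $|f(g)-\mu(A_{i})\mu(A_{j})|<\varepsilon$ for $|g|>N$; replacing $f$ on the finite set $\{|g|\leqslant N\}$ by nearby rationals and setting it equal to a fixed rational near $\mu(A_{i})\mu(A_{j})$ on $\{|g|>N\}$ yields a function within $\varepsilon$ of $f$, and such functions form a countable set. Consequently the countable product $\prod_{i,j}\mathcal{C}_{ij}$, metrized by $\hat\rho\big((f_{ij}),(h_{ij})\big)=\sum_{i,j}2^{-(i+j)}\|f_{ij}-h_{ij}\|_{\infty}$, is a separable metric space, being a countable product of separable metric spaces. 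Now consider $\Phi\colon\mathcal{M}_{\mathcal{G}}\to\prod_{i,j}\mathcal{C}_{ij}$, $\Phi(T)=(f^{T}_{ij})_{i,j}$, and the pseudometric $\rho(T,S)=\hat\rho(\Phi T,\Phi S)=\sum_{i,j}2^{-(i+j)}\sup_{g}|\mu(T^{g}A_{i}\cap A_{j})-\mu(S^{g}A_{i}\cap A_{j})|$. The elementary estimate $w(T,S)=\sup_{g}\sum_{i,j}2^{-(i+j)}|\mu(T^{g}A_{i}\cap A_{j})-\mu(S^{g}A_{i}\cap A_{j})|\leqslant\rho(T,S)$ and, for each fixed $(i,j)$, $\sup_{g}|\mu(T^{g}A_{i}\cap A_{j})-\mu(S^{g}A_{i}\cap A_{j})|\leqslant 2^{i+j}w(T,S)$ together show (the second via dominated convergence of the summable series) that $\rho$ and $w$ generate the same topology. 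Since $\Phi$ is an isometry of $(\mathcal{M}_{\mathcal{G}},\rho)$ into the separable metric space $\prod_{i,j}\mathcal{C}_{ij}$, and every subspace of a separable metric space is separable, $(\mathcal{M}_{\mathcal{G}},\rho)$, hence $(\mathcal{M}_{\mathcal{G}},w)$ and $(\mathcal{M}_{\mathcal{G}},m)$, is separable.

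The main obstacle, and indeed the whole point, is the third step: the $\sup_{g}$ would destroy separability on all of $\mathcal{A}_{\mathcal{G}}$, and it is rescued only because mixing forces every coordinate $f^{T}_{ij}$ into the separable space $\mathcal{C}_{ij}$ of sequences with a common prescribed limit at infinity. Verifying that the sup-metric is genuinely separable on $\mathcal{C}_{ij}$ is exactly where the mixing hypothesis enters, so some care is needed there; the equivalence of $\rho$ and $w$ and the abstract passage from the separable ambient product to the subspace $\mathcal{M}_{\mathcal{G}}$ are then routine.
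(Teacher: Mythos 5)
Your proof is correct, and at its core it runs on the same fuel as the paper's: mixing forces every correlation function $g\mapsto\mu(T^{g}A_{i}\cap A_{j})$ to converge to the fixed, $T$-independent limit $\mu(A_{i})\mu(A_{j})$, so the supremum over $g$ reduces to a finite set of values plus a uniformly controlled tail, and this is exactly what rescues separability under the $\ell^{\infty}$-type metric. The packaging, however, is genuinely different. The paper works directly with the base neighbourhoods $\mathcal{Q}(T,q,\varepsilon)$ of Lemma~\ref{lem:Topology}: for each finite $q$ and $\varepsilon>0$ it attaches to every mixing $T$ a finite parameter collection (a mixing threshold $n$ together with the $\tfrac{\varepsilon}{2}$-rounded values of $\mu(T^{g}A\cap B)$ for $|g|\leqslant n$), observes that two actions sharing a collection lie in a common $\mathcal{Q}(\cdot,q,\varepsilon)$, and takes one representative per collection, obtaining countable covers by basic neighbourhoods. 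You instead construct an isometric embedding of $(\mathcal{M}_{\mathcal{G}},\rho)$ into the countable product $\prod_{i,j}\mathcal{C}_{ij}$ of sup-normed spaces of functions with prescribed limit at infinity, prove each factor separable by the same rounding device, and invoke the general facts that countable products and subspaces of separable metric spaces are separable. Your route is more structural and isolates cleanly where the mixing hypothesis enters, at the price of the additional comparison between $\rho$ and $w$ --- though for separability alone the easy inequality $w\leqslant\rho$ already suffices, since a countable $\rho$-dense set is automatically $w$-dense. One small imprecision: the approximant you build for $f\in\mathcal{C}_{ij}$, being equal to a rational constant on $\{g\mid |g|>N\}$, need not itself belong to $\mathcal{C}_{ij}$; either set it equal to $\mu(A_{i})\mu(A_{j})$ there, or note that a countable $\varepsilon$-net lying outside the space still yields separability of the subspace. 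This is cosmetic and does not affect the argument.
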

\begin{proof}
It is sufficient to show that for each finite $q\subset\left\{ A_{i}\right\} $
and arbitrary $\varepsilon>0$ one can choose the countable sets of
actions $\left\{ T_{l}\right\} $ such that the space $\mathcal{M_{G}}$
has a covering by the sets $\left\{ \mathcal{Q}\left(T_{l},q,\varepsilon\right)\right\} _{l}$.

For every mixing $\mathcal{G}$-action $T$ there is a finite collection
of parameters defined as follows:

$n$ is a positive integer such that 
\[
A,B\in q,\left|g\right|>n\Rightarrow\left|\mu\left(T^{g}A\cap B\right)-\mu\left(A\right)\mu\left(B\right)\right|<\frac{\varepsilon}{2};
\]
$\left\{ n_{g}\right\} _{\left|g\right|\leqslant n}$ is the collection
of positive integers such that 
\[
A,B\in q,\left|g\right|\leqslant n\Rightarrow\left|\mu\left(T^{g}A\cap B\right)-\frac{\varepsilon n_{g}}{2}\right|<\frac{\varepsilon}{2}.
\]
The set of these parameters is countable. If actions $T$ and $S$
have the same parameter collections then for each pair $A,B\in q$,
one has 
\[
\sup_{g}\left|\mu\left(T^{g}A\cap B\right)-\mu\left(S^{g}A\cap B\right)\right|=
\]
\[
=\max\left\{ \sup_{\left|g\right|\leqslant n}\left|\mu\left(T^{g}A\cap B\right)-\mu\left(S^{g}A\cap B\right)\right|,\sup_{\left|g\right|>n}\left|\mu\left(T^{g}A\cap B\right)-\mu\left(S^{g}A\cap B\right)\right|\right\} \leqslant
\]
\[
\leqslant\max\Bigl\{\sup_{\left|g\right|\leqslant n}\left(\left|\mu\left(T^{g}A\cap B\right)-\frac{\varepsilon n_{g}}{2}\right|+\left|\mu\left(S^{g}A\cap B\right)-\frac{\varepsilon n_{g}}{2}\right|\right),
\]
\[
\sup_{\left|g\right|>n}\left(\left|\mu\left(T^{g}A\cap B\right)-\mu\left(A\right)\mu\left(B\right)\right|+\left|\mu\left(A\right)\mu\left(B\right)-\mu\left(S^{g}A\cap B\right)\right|\right)\Bigr\}<\varepsilon.
\]
Hence $S\in\mathcal{Q}\left(T,q,\varepsilon\right)$ and the required
countable set of actions can be formed by choosing one action for
each of the constructed parameter collections.\end{proof}
\begin{theorem}
$\mathcal{M_{G}}$ is a complete separable metric space.\end{theorem}
\begin{proof}
By virtue of the previous lemma it remains to check completeness.
Let $\left\{ T_{i}\right\} \subset\mathcal{M}_{\mathcal{G}}$ be a
Cauchy sequence with respect to $m$. Then it is also Cauchy in the
space $\mathcal{A_{G}}$ and converges in $\mathcal{A_{G}}$ to some
$\mathcal{G}$-action $T$. In addition $T_{i}^{g}\rightarrow T^{g}$
for every $g\in\mathcal{G}$. Since $d\left(T,T_{i}\right)\rightarrow0$
it is sufficient to check that $T$ is mixing and 
\[
\sup_{g}a\left(T^{g},T_{i}^{g}\right)\rightarrow0.
\]
Let $i\in\mathbb{N}$ be such that for any $l,m>i$ and every $g\in\mathcal{G}$
one has $a\left(T_{l}^{g},T_{m}^{g}\right)<\varepsilon.$ Now $T_{l}^{g}$
converges weakly to $T^{g}$, so there exists $n\left(g\right)>i$
such that $a\left(T_{n\left(g\right)}^{g},T^{g}\right)<\varepsilon$. 

It follows that for $m>i$, 
\[
\sup_{g}a\left(T^{g},T_{m}^{g}\right)\leqslant\sup_{g}\left(a\left(T^{g},T_{n\left(g\right)}^{g}\right)+a\left(T_{m}^{g},T_{n\left(g\right)}^{g}\right)\right)<2\varepsilon.
\]

Next, for $m$ chosen above and given sets $A_{i},A_{j}$ there is a
number $k_{0}$ such that 
\[
\left|\mu\left(T_{m}^{g}A_{i}\cap A_{j}\right)-\mu\left(A_{i}\right)\mu\left(A_{j}\right)\right|<\varepsilon,
\]
 if $\left|g\right|>k_{0}$.

Now, using the inequality 
\[
\frac{1}{2^{i+j}}\left|\mu\left(T_{m}^{g}A_{i}\cap A_{j}\right)-\mu\left(T^{g}A_{i}\cap A_{j}\right)\right|\leqslant
\]
\[\leqslant\sum_{i,j\in\mathbb{N}}\frac{1}{2^{i+j}}\left|\mu\left(T_{m}^{g}A_{i}\cap A_{j}\right)-\mu\left(T^{g}A_{i}\cap A_{j}\right)\right|=a\left(T_{m}^{g},T^{g}\right),
\]
one has 
\[
\left|\mu\left(T^{g}A_{i}\cap A_{j}\right)-\mu\left(A_{i}\right)\mu\left(A_{j}\right)\right|\leqslant
\]
\[
\leqslant\left|\mu\left(T_{m}^{g}A_{i}\cap A_{j}\right)-\mu\left(T^{g}A_{i}\cap A_{j}\right)\right|+\left|\mu\left(A_{i}\right)\mu\left(A_{j}\right)-\mu\left(T_{m}^{g}A_{i}\cap A_{j}\right)\right|<
\]
\[
<\varepsilon\left(2^{i+j+1}+1\right),
\]
whence in view of arbitrariness of $\varepsilon,i$ and $j$ it follows
that $T$ is mixing. 
\end{proof}
\section{Dense subsets}
\label{sec:2}

We call the family of actions 
\[
\left\{ U^{-1}TU\right\} _{U\in\mathcal{A}}
\]
\emph{conjugacy class }of $\mathcal{G}$-action $T$. The closure
of this class in $\mathcal{M_{G}}$ is denoted by $\overline{T}$. 
\begin{lemma}
For each $U\in\mathcal{A}$ the mapping $\widetilde{U}:T\mapsto U^{-1}TU,\mathcal{M_{\mathcal{G}}}\rightarrow\mathcal{M_{\mathcal{G}}}$
is continuous.\end{lemma}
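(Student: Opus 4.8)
The plan is to first record that $\widetilde U$ is well defined, i.e. that $U^{-1}TU$ is again mixing whenever $T$ is: since $U$ preserves $\mu$, one has $\mu((U^{-1}T^gU)A\cap B)=\mu(T^g(UA)\cap(UB))$, and as $|g|\to\infty$ the right-hand side tends to $\mu(UA)\mu(UB)=\mu(A)\mu(B)$. This identity $\mu((U^{-1}RU)A\cap B)=\mu(R(UA)\cap(UB))$, valid for every measure preserving $R$, is the computational backbone of the whole argument: conjugation by $U$ simply replaces the test sets $A,B$ by their images $UA,UB$.

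By Lemma \ref{lem:Topology} the topology of $\mathcal{M_G}$ has the base $\mathcal{Q}(T,q,\varepsilon)$, so continuity of $\widetilde U$ at $T$ means: given $q_0$ and $\varepsilon_0$ I must produce $q$ and $\varepsilon$ with $\widetilde U(\mathcal{Q}(T,q,\varepsilon))\subseteq\mathcal{Q}(U^{-1}TU,q_0,\varepsilon_0)$. Using the identity above, the target condition reads $\sup_g|\mu(T^g(UA)\cap(UB))-\mu(S^g(UA)\cap(UB))|<\varepsilon_0$ for $A,B\in q_0$. Since $U$ is invertible and measure preserving, $\{UA_i\}$ is again a countable family generating $\Sigma$. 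Thus it suffices to prove the following family-independence statement: if $\sup_g|\mu(T_n^gA_i\cap A_j)-\mu(T^gA_i\cap A_j)|\to0$ for all generators $A_i,A_j$, then $\sup_g|\mu(T_n^gC\cap D)-\mu(T^gC\cap D)|\to0$ for every pair of measurable sets $C,D$; applying this with $C=UA_i$, $D=UA_j$ finishes the proof.

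To prove the claim I would first approximate $C$ and $D$ in measure by finite Boolean combinations $\tilde C,\tilde D$ of the $A_i$. Because each $T_n^g$ and $T^g$ preserves $\mu$, the error $|\mu(RC\cap D)-\mu(R\tilde C\cap\tilde D)|\le\mu(C\bigtriangleup\tilde C)+\mu(D\bigtriangleup\tilde D)$ is bounded uniformly in $g$ and in the action, so the problem reduces to a single fixed pair of algebra sets, and then, by additivity, to a single fixed pair of atoms $P,Q$ of the finite algebra. It remains to show $\sup_g|\mu(T_n^gP\cap Q)-\mu(T^gP\cap Q)|\to0$.

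The hard part is exactly this last step, where only single-generator correlations are under control yet a correlation against a Boolean combination must be estimated uniformly in $g$. I would argue by contradiction with a subsequence $g_n$ realizing a gap $\ge\varepsilon_0$. If $\{g_n\}$ is bounded, some element repeats, and I use that convergence in $\mathcal{M_G}$ forces $T_n^g\to T^g$ weakly in $\mathcal{A}$ for each fixed $g$, whence every correlation, not merely those of generators, converges; this contradicts the gap. If $|g_n|\to\infty$, mixing of $T$ gives $\mu(T^{g_n}A_i\cap A_j)\to\mu(A_i)\mu(A_j)$, and the generator hypothesis then forces $\mu(T_n^{g_n}A_i\cap A_j)\to\mu(A_i)\mu(A_j)$ for all $i,j$; that is, the self-joinings $\lambda_n(A\times B)=\mu(T_n^{g_n}A\cap B)$ converge on a generating family of rectangles to $\mu\times\mu$. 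By compactness of the space of self-joinings this forces $\lambda_n\to\mu\times\mu$ in the joining topology, hence $\mu(T_n^{g_n}P\cap Q)=\lambda_n(P\times Q)\to\mu(P)\mu(Q)$, again contradicting the gap since $\mu(T^{g_n}P\cap Q)\to\mu(P)\mu(Q)$ as well. The delicate point throughout is this uniformity in $g$; it is broken precisely by combining per-$g$ weak convergence on the bounded range with the joining-compactness argument, powered by the fact that every action in $\mathcal{M_G}$ is mixing.
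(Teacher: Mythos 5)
Your opening move is exactly the paper's: the identity $\mu\left(U^{-1}R\,UA\cap B\right)=\mu\left(R\left(UA\right)\cap\left(UB\right)\right)$ turns conjugation into a change of test sets, and the paper's entire proof is the one-line observation that $P\in\mathcal{Q}\left(T,Uq,\varepsilon\right)$ implies $U^{-1}PU\in\mathcal{Q}\left(U^{-1}TU,q,\varepsilon\right)$, where $\mathcal{Q}\left(T,Uq,\varepsilon\right)$ is the set cut out by the test sets $UA,UB$, $A,B\in q$. The only thing left implicit there --- and the only thing your second half is really trying to supply --- is that such a set is a neighbourhood of $T$ even though $Uq\not\subset\left\{ A_{i}\right\} $. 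The efficient way to get this is a direct approximation: since every transformation in sight preserves $\mu$, one has $\left|\mu\left(RC\cap D\right)-\mu\left(RA_{i}\cap A_{j}\right)\right|\leqslant\mu\left(C\bigtriangleup A_{i}\right)+\mu\left(D\bigtriangleup A_{j}\right)$ for every measure preserving $R$, hence uniformly in $g$ and in the action; choosing $A_{i},A_{j}$ close in measure to $UA,UB$ places a basic neighbourhood $\mathcal{Q}\left(T,q',\delta\right)$ inside $\mathcal{Q}\left(T,Uq,\varepsilon\right)$. This needs $\left\{ A_{i}\right\} $ to be dense in the measure algebra, which is the same standing hypothesis that makes $d$ and $a$ generate one topology, so it costs nothing.

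The genuine soft spot is in your case $\left|g_{n}\right|\rightarrow\infty$. You assert that convergence of $\lambda_{n}\left(A_{i}\times A_{j}\right)$ to $\mu\left(A_{i}\right)\mu\left(A_{j}\right)$ on generator rectangles, \emph{by compactness} of the space of self-joinings, forces $\lambda_{n}\rightarrow\mu\times\mu$. Compactness only produces subsequential limits; to identify each such limit with $\mu\times\mu$ you must know that the rectangles $A_{i}\times A_{j}$ determine a coupling, i.e. that $\left\{ 1_{A_{i}}\right\} $ is total in $L^{2}$ (equivalently, that $\left\{ A_{i}\right\} $ is dense in the measure algebra or at least a $\pi$-system). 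For a bare countable family generating $\Sigma$ this is false: two couplings can agree on every $A_{i}\times A_{j}$ yet differ on $\left(A_{1}\cap A_{2}\right)\times A_{3}$ --- which is precisely the atom-versus-generator difficulty you flagged as the hard part, reappearing one level up. So your argument is valid only under the density hypothesis that makes the whole detour through atoms, subsequences and joinings unnecessary; with that hypothesis stated, you should collapse the second half to the two-line uniform approximation above, and you land on the paper's proof.
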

\begin{proof}
Consider neighbourhoods $\mathcal{Q}\left(U^{-1}TU,q,\varepsilon\right)$
and $\mathcal{Q}\left(T,Uq,\varepsilon\right)$ of the actions $U^{-1}TU$
and $T$ respectively.

Then
\[
P\in\mathcal{Q}\left(T,Uq,\varepsilon\right)\Rightarrow
\]
 
\[
\Rightarrow\forall A,B\in q:\varepsilon>{\displaystyle \sup_{g\in\mathcal{G}}}\left|\mu\left(T^{g}UA\cap UB\right)-\mu\left(P^{g}UA\cap UB\right)\right|=
\]
\[
={\displaystyle \sup_{g\in\mathcal{G}}}\left|\mu\left(U^{-1}T^{g}UA\cap B\right)-\mu\left(U^{-1}P^{g}UA\cap B\right)\right|\Rightarrow
\]
\[
\Rightarrow U^{-1}PU\in\mathcal{Q}\left(U^{-1}TU,q,\varepsilon\right).
\]

\end{proof}
The $\mathcal{G}$-action $S$ is a \emph{factor} of $\mathcal{G}$-action
$T$, if there exists measure preserving mapping $v$ of the space
$\left(X,\Sigma,\mu\right)$ into itself such that  $v\circ T^{g}=S^{g}\circ v$
mod0 for every $g\in\mathcal{G}$.

\begin{lemma}
\label{lem:factorDence}If $S$ is a factor of $T$ then $\overline{S}\subset\overline{T}$.\end{lemma}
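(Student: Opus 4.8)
The plan is to use the base of neighbourhoods $\mathcal{Q}\left(T,q,\varepsilon\right)$ furnished by Lemma~\ref{lem:Topology} and to reduce the whole statement to showing that the factor $S$ itself lies in $\overline{T}$. First I would record that $\overline{T}$ is invariant under every conjugation $\widetilde{U}$. Indeed, $\widetilde{U}$ permutes the conjugacy class of $T$, since $\widetilde{U}\left(V^{-1}TV\right)=\left(VU\right)^{-1}T\left(VU\right)$, and it is continuous by the previous lemma; hence $\widetilde{U}\left(\overline{T}\right)\subset\overline{T}$. Consequently, once $S\in\overline{T}$ is known, every conjugate $U^{-1}SU=\widetilde{U}\left(S\right)$ also lies in $\overline{T}$, and since $\overline{T}$ is closed it must contain the closure $\overline{S}$ of the conjugacy class of $S$. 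So the problem collapses to proving $S\in\overline{T}$.

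Second, I would exploit the intertwining relation on the measure algebra. The identity $v\circ T^{g}=S^{g}\circ v$ gives $v^{-1}\left(S^{g}A\right)=T^{g}v^{-1}\left(A\right)$ mod~$0$ for every $g$ and every measurable $A$, so, $v$ being measure preserving, one obtains the \emph{exact} correlation identity
\[
\mu\left(S^{g}A\cap B\right)=\mu\left(T^{g}v^{-1}\left(A\right)\cap v^{-1}\left(B\right)\right)\qquad\text{for all }g\in\mathcal{G}.
\]
As a byproduct, letting $\left|g\right|\to\infty$ shows $S$ is mixing, so $\overline{S}$ is indeed defined. Thus the correlations of $S$ on the generators $A_{i}$ are reproduced \emph{verbatim} by $T$ on the pulled-back sets $v^{-1}\left(A_{i}\right)$, uniformly in $g$.

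Third, given a finite $q\subset\left\{A_{i}\right\}$ and $\varepsilon>0$, I would manufacture a single $U\in\mathcal{A}$ realizing this pullback by conjugation. Consider the finite algebra generated by $q$ and the finite algebra generated by $\left\{v^{-1}\left(A_{i}\right):A_{i}\in q\right\}$. Because $v^{-1}$ is a measure-preserving Boolean embedding, corresponding atoms of these two algebras have equal measure. As $\mu$ is continuous and $X$ is a Lebesgue space, each atom is isomorphic mod~$0$ to the corresponding atom of equal measure; gluing these partial isomorphisms produces an invertible measure preserving $U$ with $UA_{i}=v^{-1}\left(A_{i}\right)$ mod~$0$ for every $A_{i}\in q$. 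Then for all $A_{i},A_{j}\in q$ and all $g$,
\[
\mu\left(\left(U^{-1}TU\right)^{g}A_{i}\cap A_{j}\right)=\mu\left(T^{g}UA_{i}\cap UA_{j}\right)=\mu\left(T^{g}v^{-1}\left(A_{i}\right)\cap v^{-1}\left(A_{j}\right)\right)=\mu\left(S^{g}A_{i}\cap A_{j}\right),
\]
so the $\sup_{g}$ difference vanishes and $U^{-1}TU\in\mathcal{Q}\left(S,q,\varepsilon\right)$. As $q$ and $\varepsilon$ are arbitrary, $S\in\overline{T}$, which by the reduction of the first paragraph completes the argument.

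The step I expect to be the main obstacle is the construction of $U$ in the third paragraph: matching the two finite algebras by a single \emph{invertible, globally defined} measure preserving transformation. The intertwining identity and the reduction are formal, whereas the real content is that the Lebesgue-space structure (non-atomicity of $\mu$) lets one promote the atom-by-atom coincidence of measures into an honest automorphism of $X$ carrying $q$ onto $v^{-1}\left(q\right)$ — this is precisely what converts a \emph{factor} relation into \emph{approximation by conjugates}.
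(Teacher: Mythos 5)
Your proposal is correct and follows essentially the same route as the paper: reduce to showing $S\in\overline{T}$, and for a given neighbourhood $\mathcal{Q}\left(S,q,\varepsilon\right)$ take $U\in\mathcal{A}$ carrying each $A\in q$ to $v^{-1}A$, so that the correlations of $U^{-1}TU$ on $q$ coincide exactly with those of $S$. Your extra details --- the conjugation-invariance of $\overline{T}$ behind the reduction, and the atom-matching construction of $U$ in the Lebesgue space --- are points the paper leaves implicit, and they are handled correctly.
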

\begin{proof}
It suffices to check that $S\in\overline{T}$. Let $v$ be so that
$v\circ T^{g}=S^{g}\circ v$ mod0. Consider a neighbourhood $\mathcal{Q}\left(S,q,\varepsilon\right)$
of the action $S$ and show that in this neighbourhood there is an
action of the form $U^{-1}TU$ for certain $U\in\mathcal{A}$. We
take as $U$ any invertible measure preserving transformation of the
space $\left(X,\Sigma,\mu\right)$ transferring $A$ to $v^{-1}A$
for each $A\in q$.

Then for all $g\in\mathcal{G}$, $A,B\in q$, we have 
\[
\mu\left(S^{g}A\cap B\right)-\mu\left(U^{-1}T^{g}UA\cap B\right)=\mu\left(S^{g}A\cap B\right)-\mu\left(T^{g}UA\cap UB\right)=
\]

\[
=\mu\left(S^{g}vv^{-1}A\cap B\right)-\mu\left(T^{g}v^{-1}A\cap v^{-1}B\right)=
\]
\[
=\mu\left(v^{-1}S^{g}vv^{-1}A\cap v^{-1}B\right)-\mu\left(T^{g}v^{-1}A\cap v^{-1}B\right)=
\]
\[
=\mu\left(v^{-1}vT^{g}v^{-1}A\cap v^{-1}B\right)-\mu\left(T^{g}v^{-1}A\cap v^{-1}B\right)=0.
\]

The last equality follows from the inclusion $v^{-1}vT^{g}v^{-1}A\supset T^{g}v^{-1}A$
and the fact that measures of these sets coincide, hence they are equal
mod 0 (it is used here that $\mu\left(v^{-1}vT^{g}v^{-1}A\right)=\mu\left(v^{-1}S^{g}A\right)=\mu\left(A\right)$).\end{proof}
\begin{corollary}
\label{cor1}Actions $T$ and $S$ can be approximated arbitrarily
well by actions conjugate to $T\times S$.
\end{corollary}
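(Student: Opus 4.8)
The plan is to recognise this as an immediate consequence of Lemma~\ref{lem:factorDence}, once two preliminary facts are in place: that the Cartesian product $T\times S$ is itself a mixing action, and that each of $T$ and $S$ arises as a factor of $T\times S$.

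First I would check that $T\times S\in\mathcal{M_{G}}$, so that the closure $\overline{T\times S}$ is even defined. The product acts on $\left(X\times X,\Sigma\otimes\Sigma,\mu\times\mu\right)$, which by the standing assumption is isomorphic to $\left(X,\Sigma,\mu\right)$, so $T\times S$ may be regarded as a $\mathcal{G}$-action on $X$. For rectangles one computes
\[
\left(\mu\times\mu\right)\bigl(\left(T\times S\right)^{g}\left(A\times B\right)\cap\left(C\times D\right)\bigr)=\mu\bigl(T^{g}A\cap C\bigr)\,\mu\bigl(S^{g}B\cap D\bigr),
\]
and since $T$ and $S$ are mixing, each factor tends to $\mu\left(A\right)\mu\left(C\right)$ and $\mu\left(B\right)\mu\left(D\right)$ respectively as $\left|g\right|\to\infty$; a routine approximation by finite disjoint unions of rectangles then extends the mixing property to all pairs of measurable sets. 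Hence $T\times S$ lies in $\mathcal{M_{G}}$.

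Next I would exhibit $T$ and $S$ as factors. Taking $v$ to be the projection onto the first coordinate, $v\left(x,y\right)=x$, one has $v\circ\left(T\times S\right)^{g}=T^{g}\circ v$ mod $0$, so $T$ is a factor of $T\times S$; the projection onto the second coordinate shows likewise that $S$ is a factor. By Lemma~\ref{lem:factorDence} this gives $\overline{T}\subset\overline{T\times S}$ and $\overline{S}\subset\overline{T\times S}$, and in particular $T,S\in\overline{T\times S}$. Since $\overline{T\times S}$ is by definition the closure in $\mathcal{M_{G}}$ of the conjugacy class $\left\{ U^{-1}\left(T\times S\right)U\right\}_{U\in\mathcal{A}}$, both $T$ and $S$ are approximated arbitrarily well by conjugates of $T\times S$, which is exactly the assertion.

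I expect no serious obstacle: the only point needing a little care is that the definition of factor demands a self-map of $X$, whereas the coordinate projection is literally a map $X\times X\to X$; this mismatch is absorbed by the isomorphism $X\times X\cong X$ fixed at the outset (the projection being measure preserving since $\left(\mu\times\mu\right)\left(A\times X\right)=\mu\left(A\right)$). The verification that $T\times S$ is mixing is the single computational ingredient, and it is entirely standard.
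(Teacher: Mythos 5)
Your proof is correct and takes essentially the route the paper intends: the corollary is presented as an immediate consequence of Lemma~\ref{lem:factorDence}, applied via the coordinate projections that exhibit $T$ and $S$ as factors of $T\times S$ (with the isomorphism $X\times X\cong X$ fixed at the outset, as the paper notes in Section~\ref{sec:1}). The paper leaves the verification that $T\times S$ is mixing and the identification of the factor maps implicit; your filling in of these details is standard and accurate.
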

Subset of metric space has \emph{type} $G_{\delta}$ if it is countable
intersection of open sets. Everywhere dense subset in a complete metric
space is called generic. The expression ''typical mixing possesses
the property $P$'' means that this $P$ hold for generic set of
mixing actions. 
\begin{theorem}
\label{thm:soprTipical}The set 
\[
W=\left\{ T\in\mathcal{M_{G}}\mid\overline{T}=\mathcal{M_{G}}\right\} 
\]
is generic. \end{theorem}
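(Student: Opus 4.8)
The plan is to exhibit $W$ as a dense $G_\delta$ set and then invoke the Baire category theorem, which is available because the previous theorem guarantees that $\mathcal{M_G}$ is a complete metric space. Since $\mathcal{M_G}$ is separable (hence second countable) I would fix a countable base $\left\{V_n\right\}_{n\in\mathbb{N}}$ of nonempty open sets. Because each $V_n$ is open while $\overline{T}$ is closed, the condition $\overline{T}=\mathcal{M_G}$ is equivalent to saying that the conjugacy class $\left\{U^{-1}TU\right\}_{U\in\mathcal{A}}$ meets every $V_n$ (a closed set meeting every basic open set is the whole space). Consequently, setting
\[
W_n=\left\{T\in\mathcal{M_G}\mid\exists\,U\in\mathcal{A},\ U^{-1}TU\in V_n\right\},
\]
one has $W=\bigcap_n W_n$, and it suffices to prove that each $W_n$ is open and dense.

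Openness of $W_n$ I would deduce from the continuity of the conjugation maps $\widetilde{U}:T\mapsto U^{-1}TU$ established above. Indeed, $W_n=\bigcup_{U\in\mathcal{A}}\widetilde{U}^{-1}\left(V_n\right)$, and each preimage $\widetilde{U}^{-1}\left(V_n\right)$ is open since $\widetilde{U}$ is continuous and $V_n$ is open; a union of open sets is open.

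The heart of the argument is the density of $W_n$, and this is the step where I expect the real work to lie. Given an arbitrary nonempty open $O\subset\mathcal{M_G}$, I would pick $T_0\in O$ and some $S\in V_n$. The product action $T_0\times S$ is again mixing, so it belongs to $\mathcal{M_G}$, and by Corollary \ref{cor1} the action $T_0$ is approximated arbitrarily well by actions conjugate to $T_0\times S$; hence there is some $R=V^{-1}\left(T_0\times S\right)V\in O$. Conjugate actions share the same conjugacy class, so $\overline{R}=\overline{T_0\times S}$. Now $S$ is a factor of $T_0\times S$ via the coordinate projection, so Lemma \ref{lem:factorDence} yields $S\in\overline{S}\subset\overline{T_0\times S}=\overline{R}$. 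Since $V_n$ is an open neighbourhood of $S$, the closed set $\overline{R}$ meets the open set $V_n$, which forces an actual conjugate of $R$ to lie in $V_n$; that is, $R\in W_n$. Thus $R\in O\cap W_n$, proving that $W_n$ is dense.

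With each $W_n$ open and dense and $\mathcal{M_G}$ complete, the Baire category theorem shows that $W=\bigcap_n W_n$ is a dense $G_\delta$, in particular everywhere dense, hence generic. The main obstacle is precisely the density step: it rests on combining the product construction (requiring that a product of two mixing actions is again mixing, so that Corollary \ref{cor1} applies) with the factor principle of Lemma \ref{lem:factorDence}, which lets one recover an action of the target neighbourhood $V_n$ inside the closure of any conjugacy class containing it as a factor. Everything else is the standard Baire-category packaging.
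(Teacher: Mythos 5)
Your argument is correct, and its skeleton matches the paper's: both express $W$ as a countable intersection of sets of the form $\left\{ T\mid\exists U:U^{-1}TU\in V\right\} $ over a countable family of basic open sets $V$ (the paper takes the balls $\mathcal{U}_{n}\left(T_{i}\right)$ around a countable dense sequence $\left\{ T_{i}\right\} $ where you take a countable base $\left\{ V_{n}\right\} $ — essentially the same thing), and both obtain openness of these sets from the continuity of $\widetilde{U}$. The genuine difference is in the density step. You prove that each $W_{n}$ is dense separately: approximate a given $T_{0}$ by conjugates of the binary product $T_{0}\times S$ with $S\in V_{n}$ (Corollary \ref{cor1}), then use Lemma \ref{lem:factorDence} to see that $S\in\overline{T_{0}\times S}$ and hence that some conjugate of the approximant lands in $V_{n}$; you then finish with the Baire category theorem. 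The paper instead exhibits a single explicit element of $W$, namely the Cartesian product of all the $T_{i}$'s: each $T_{i}$ is a factor of this product, so by Lemma \ref{lem:factorDence} its conjugacy class accumulates at every $T_{i}$ and is therefore dense, and since $W$ is conjugation-invariant it contains that entire class — so $W$ is dense without a separate Baire step (Baire, or rather the $G_{\delta}$ structure, is only recorded as additional information). Your route buys economy of hypotheses: you only ever need products of two mixing actions, exactly what Corollary \ref{cor1} provides, and you need not verify that a countably infinite product of mixing actions is mixing (and acts on a space isomorphic to $X$), which the paper's argument tacitly requires. The paper's route buys brevity and a concrete witness: it names an action whose conjugacy class is dense. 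Both are complete proofs of the statement.
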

\begin{proof}
Let $\left\{ T_{i}\right\} $ be a countable dense subset in $\mathcal{M_{G}}$.
The set $W$ consists of those mixing actions that can be transferred
by conjugation into each of the sets 
\[
\mathcal{U}_{n}\left(T_{i}\right)=\left\{ S\mid m\left(T_{i},S\right)<\frac{1}{n}\right\} ,\,\,\,\, i,n\in\mathbb{N}.
\]

Thus 
\[
W=\bigcap_{i,n}\left\{ S\mid\exists U:U^{-1}SU\in\mathcal{U}_{n}\left(T_{i}\right)\right\} .
\]
 Note that all the sets right to the intersection symbol are open.
Indeed, $\widetilde{U}\left(T\right):T\mapsto U^{-1}TU$ is continuous
mapping, hence $\widetilde{U}$ --- inverse image of a neighbourhood
of the action $U^{-1}SU$ contains some neighbourhood of $S$. 

It remains to note that conjugacy class for Cartesian product of all
$T_{i}$`s is dense in $\mathcal{M_{G}}$, since it approaches arbitrarily
close to each of $T_{i}$`s(cor \ref{cor1}).
\end{proof}
\section{Everywhere dense sets of actions for monotilable amenable groups}
\label{sec:3}

In the section we consider finitely generated infinite monotilable
amenable groups. 
\begin{defn}
Group $\mathcal{G}$ is called\emph{ monotilable amenable}, if there
exists a sequence $\left\{ F_{i}\right\} _{i\in\mathbb{N}}$ of finite
subsets in $\mathcal{G}$, satisfying the following conditions: 

1. For every $g\in\mathcal{G}$, one has $\frac{\#\left(gF_{i}\cap F_{i}\right)}{\#F_{i}}\rightarrow1$
as $i\rightarrow\infty$ (here and then symbol ''$\#$'' denotes
the number of element in the set); 
\end{defn}
2. For each $i\in\mathbb{N}$, the set $F_{i}$ is a \emph{tile} in
the sense that there exists a collection $\left\{ c_{j}^{(i)}\right\} \subset\mathcal{G}$
such that $\mathcal{G}={\displaystyle \bigsqcup_{j}}F_{i}c_{j}^{(i)}$. 

We recall that an action $T$ is\emph{ free} if the set 
\[
\left\{ x\in X\mid\exists g\neq e,T^{g}x=x\right\} 
\]
 has measure zero. 

The following proposition \cite{OrnsteinWeiss87} is a group analog
of Rokhlin-Halmos lemma.
\begin{lemma}
\label{lem:Alpern}For arbitrary $\varepsilon>0$, free $\mathcal{G}$-action
$Q$ and a tile $G\subset\mathcal{G}$, there exists a set $E$ such
that $Q^{g}E,g\in G$ are disjoint and their total measure in not
less $1-\varepsilon$. 
\end{lemma}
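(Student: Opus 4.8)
The plan is to tile almost all of $X$ by left-$G$-translates of a measurably chosen set of centres, exploiting that $G$ is a genuine tile while letting amenability absorb the unavoidable boundary loss. First I record the reformulation: since each $Q^{g}$ preserves $\mu$, a set $E$ with $\{Q^{g}E\}_{g\in G}$ disjoint has tower body $\bigcup_{g\in G}Q^{g}E$ of measure exactly $\#G\cdot\mu(E)$, so the assertion is that this body can be made to fill all but $\varepsilon$ of $X$. Fix a decomposition $\mathcal{G}=\bigsqcup_{j}Gc_{j}$ witnessing that $G$ is a tile; the only reason one cannot transport this single combinatorial tiling onto the orbits of $Q$ is that a free action of an infinite group admits no measurable choice of one base point per orbit, and the whole difficulty is to show that a measurable, \emph{approximately} coherent choice costs only $\varepsilon$.

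Next I would produce the base points by a marker argument, which is where freeness enters. For a large finite symmetric window $W\subseteq\mathcal{G}$, a maximal (by Zorn, or by exhaustion of measure) $W$-separated measurable set $M$ --- one with $Q^{w}M\cap M=\emptyset$ for every $e\neq w\in W$ --- is automatically a complete section meeting almost every orbit, and by maximality its markers are $W$-syndetic along each orbit. Identifying an orbit with $\mathcal{G}$ through one of its points, $M$ cuts the orbit into finite \emph{cells}, each a bounded piece of $\mathcal{G}$; the aim is to choose $W$ so large, and drawn from the monotilable Følner sequence $\{F_{i}\}$, that condition~1 of the definition forces each cell to be $(G,\varepsilon)$-invariant, i.e. all but an $\varepsilon$-fraction of its points $x$ satisfy $Gx\subseteq\text{cell}$.

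With the cells in hand I define $E$. Inside each cell I keep exactly those tiles $Gc$ of the fixed decomposition, read in the cell's internal coordinates, that lie entirely within the cell, and I let $E$ collect all of their centres $c$, over all cells and all orbits. Disjointness of $\{Q^{g}E\}_{g\in G}$ is then \emph{exact}: within one cell it is the bijectivity of $(g,c_{j})\mapsto gc_{j}$ onto $\mathcal{G}$ pushed forward by the free action, and across distinct cells or orbits it is immediate since these are disjoint. The only points of an orbit missed by $\bigcup_{g\in G}Q^{g}E$ are those in the $G$-boundaries of the cells, whose total relative measure is below $\varepsilon$ once the cells are invariant as above; hence $\mu\bigl(\bigcup_{g\in G}Q^{g}E\bigr)\geq 1-\varepsilon$. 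Measurability of $E$ holds because the passage from the marker pattern to the cell and then to the retained centres depends only on the configuration of $M$ inside a fixed finite window about each point, so $E$ is a countable union of measurable sets.

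The main obstacle is not the bookkeeping above but the two analytic inputs it rests on. The first is the measurable marker lemma for an arbitrary free $\mathcal{G}$-action with prescribed large separation; this is standard but genuinely uses aperiodicity. The second, and the real heart, is producing cells that are \emph{simultaneously} Følner-invariant --- so that the exact $G$-tiling wastes less than $\varepsilon$ --- and measurably coherent across orbits whose local marker geometries differ; the Voronoi-type cells of a separated net need not themselves be Følner, so one must instead force the cells to be translates of a large tile $F_{N}$ from the sequence $\{F_{i}\}$, which is exactly the quasi-tiling mechanism of Ornstein and Weiss \cite{OrnsteinWeiss87}, here made appreciably simpler by the hypothesis that $G$ is already a true tile.
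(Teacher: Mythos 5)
The paper does not prove this lemma at all: it is quoted from Ornstein and Weiss \cite{OrnsteinWeiss87} as ``a group analog of Rokhlin-Halmos lemma'', so there is no internal proof to compare yours against; you are in effect reconstructing the cited external argument. Your outline has the right architecture --- a measurable marker set cuts each orbit into finite cells, inside each cell one keeps only those translates $Gc_{j}$ of the fixed tiling of $\mathcal{G}$ that fit entirely, the centres form $E$, exact disjointness follows from the partition $\mathcal{G}=\bigsqcup_{j}Gc_{j}$ together with freeness, and the loss is the relative size of the $GG^{-1}$-boundary of the cells. That much is sound, including the measurability remark and the marker (maximal $W$-separated set) construction.

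The genuine gap is the step you yourself flag and then do not close: producing cells that are $\left(GG^{-1},\varepsilon\right)$-invariant. A maximal $W$-separated set yields cells squeezed between a separated core and a $W$-ball, and in a general amenable group (anything of exponential growth, e.g. the lamplighter group) such sets need \emph{not} be F\o lner no matter how large $W$ is taken from $\left\{ F_{i}\right\} $; condition 1 of the definition of monotilability asserts invariance of the $F_{i}$ themselves, not of arbitrary sets of comparable size, so the sentence ``condition 1 forces each cell to be $(G,\varepsilon)$-invariant'' is false as stated. Your proposed repair --- force the cells to be translates of a large tile $F_{N}$ --- is circular, because a measurable decomposition of almost every orbit into disjoint translates of $F_{N}$ \emph{is} a Rokhlin tower for the tile $F_{N}$, i.e. the very statement being proved with $F_{N}$ in place of $G$. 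Closing the loop requires the Ornstein--Weiss quasi-tiling theorem ($\varepsilon$-disjoint quasi-tilings by finitely many F\o lner sets) followed by a trimming step to convert $\varepsilon$-disjointness back into the exact disjointness your construction needs; none of this is carried out, so the proposal is an honest road map back to the reference rather than a proof.
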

A collection of sets $\left\{ Q^{g}E\right\} _{g\in G}$ with $E$
as in lemma \ref{lem:Alpern} is called Rokhlin tower. To mark that
its remainder $O=X\setminus\left(\cup_{g\in G}Q^{g}E\right)$ has
measure at most $\varepsilon$ we write $\left\{ Q^{g}E\right\} _{g\in G}^{\varepsilon}$.

It follows from lemma \ref{lem:Alpern} that conjugacy class of any
free $\mathcal{G}$-action is everywhere dense in $\mathcal{A}_{\mathcal{G}}$.
\begin{lemma}
\label{lemm:alpern2}For any set $C\subset\mathcal{G}$, free $\mathcal{G}$-action
$Q:Y\rightarrow Y$, $\varepsilon>0$ and a tile $F$ there exist
sets $G\subset\mathcal{G}$ and $E\in\Sigma$ such that 

1. $G=\sqcup_{i}Fc_{i},$ for some finite collection $\left\{ c_{i}\right\} \subset\mathcal{G}$,

2. $\#\left(gG\bigtriangleup G\right)<\varepsilon\#G$ for $g\in C$,

3. $\frac{3\left(\#F\right)^{2}}{\#G}<\varepsilon$,

4. $\left\{ Q^{g}E\right\} _{g\in G}^{\varepsilon}$ is a tower.\end{lemma}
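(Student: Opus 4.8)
The goal is to build, for a given $C$, $Q$, $\varepsilon$ and tile $F$, a larger set $G$ that is itself a disjoint union of translates of $F$ (so it is again a tile-like block), that is nearly invariant under left multiplication by elements of $C$, that is large compared to $(\#F)^2$, and that supports a Rokhlin tower with small remainder. The plan is to produce $G$ from a Følner set by cutting it into $F$-translates, and then apply Lemma~\ref{lem:Alpern} to the tile $G$.

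\begin{proof}
First I would use the monotilability of $\mathcal{G}$ together with the tiling property of $F$. By the Følner condition (part~1 of the definition of monotilable amenable group applied along the defining sequence $\{F_i\}$) I can choose a finite set $\Phi$ that is $(C,\delta)$-invariant, meaning $\#(g\Phi\bigtriangleup\Phi)<\delta\#\Phi$ for all $g\in C$, with $\delta$ chosen far smaller than $\varepsilon$ and with $\#\Phi$ taken as large as I wish. Since $F$ is a tile, $\mathcal{G}=\bigsqcup_j Fc_j$; I then let $G$ be the union of exactly those right-cosets $Fc_j$ that meet $\Phi$ (or, to keep the invariance estimate clean, those $Fc_j$ that are entirely contained in a slightly enlarged $\Phi$). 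By construction $G=\sqcup_i Fc_i$ for a finite subcollection, giving condition~1 immediately, and $\#G$ can be made as large as desired, so condition~3, $\frac{3(\#F)^2}{\#G}<\varepsilon$, holds once $\#\Phi$ is large enough relative to $\#F$.

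The central estimate is condition~2: $\#(gG\bigtriangleup G)<\varepsilon\#G$ for $g\in C$. Here the point is that $G$ differs from the Følner set $\Phi$ only in a boundary layer of cosets $Fc_j$ meeting the boundary of $\Phi$, and left multiplication by a fixed $g$ only affects cosets near the boundary as well. Concretely, $gG\bigtriangleup G$ is contained in the union of those cosets $Fc_j$ that are moved across the boundary, and each such coset lies within $F$-distance of $\partial\Phi=\{x:\exists g\in C,\ gx\notin\Phi\ \text{or}\ g^{-1}x\notin\Phi\}$. Thus $\#(gG\bigtriangleup G)$ is bounded by $\#F$ times the number of boundary cosets, which in turn is controlled by $\#(g\Phi\bigtriangleup\Phi)<\delta\#\Phi\approx\delta\#G$. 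Choosing $\delta$ small compared to $\varepsilon/\#F$ forces condition~2. \textbf{This is the step I expect to be the main obstacle}, since one must keep careful track of how the tiling of $\mathcal{G}$ by $F$ interacts with left translation and with the Følner boundary, and ensure the boundary of $G$ is genuinely thin after rounding $\Phi$ to a union of whole cosets.

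Finally, condition~4 is obtained by applying Lemma~\ref{lem:Alpern} directly. The set $G$ just built is a tile (it is a finite union of right-translates of the tile $F$, and such a union again tiles $\mathcal{G}$, which one checks by composing the tiling of $\mathcal{G}$ by $G$-blocks with the tiling of each block by $F$). Hence Lemma~\ref{lem:Alpern}, with the free action $Q$ and the tile $G$, yields a set $E$ such that $\{Q^gE\}_{g\in G}$ are disjoint with total measure at least $1-\varepsilon$; that is exactly the statement that $\{Q^gE\}_{g\in G}^{\varepsilon}$ is a Rokhlin tower. Collecting the four conditions completes the proof, the only quantitative care being to fix $\delta$ and $\#\Phi$ at the outset so that all of conditions~2 and~3 hold simultaneously.
\end{proof}
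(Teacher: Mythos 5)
Your overall strategy --- round a highly invariant F{\o}lner set to a union of whole cosets $Fc_i$ and control the error as a boundary term --- is exactly the paper's, but your treatment of condition~4 has a genuine gap. You obtain the Rokhlin tower by applying Lemma~\ref{lem:Alpern} to $G$ itself, which requires $G$ to be a tile, and you justify this by saying that a finite union of right-translates of the tile $F$ again tiles $\mathcal{G}$, ``which one checks by composing the tiling of $\mathcal{G}$ by $G$-blocks with the tiling of each block by $F$.'' That justification is circular: it presupposes the tiling of $\mathcal{G}$ by $G$-blocks, which is precisely what is in question. And the claim is false in general: take $\mathcal{G}=\mathbb{Z}$ and $F=\{0\}$, so that every finite subset is a union of $F$-cosets, yet a set such as $\{0,1,3\}$ does not tile $\mathbb{Z}$. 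Nothing in your construction forces the particular union of cosets you select to be a tile, so Lemma~\ref{lem:Alpern} cannot be invoked for $G$.

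The paper sidesteps this entirely: by the definition of a monotilable amenable group the F{\o}lner sets $F_i$ are themselves tiles, so one chooses a tile $\widetilde{G}$ that is sufficiently invariant under $C\cup FF^{-1}$, applies Lemma~\ref{lem:Alpern} to $\widetilde{G}$ to get a tower $\bigl\{Q^{g}E\bigr\}_{g\in\widetilde{G}}^{\varepsilon/2}$, and only then carves out $G=\sqcup_i Fc_i\subset\widetilde{G}$ as the union of the $F$-cosets lying inside $\widetilde{G}$. The tower over $G$ is simply the sub-collection $\{Q^{g}E\}_{g\in G}$ of the existing tower; discarding the levels indexed by $\widetilde{G}\setminus G$ costs at most $\#(\widetilde{G}\setminus G)\cdot\mu(E)\leqslant\frac{\varepsilon}{8}$, since $\#(\widetilde{G}\setminus G)<\frac{\varepsilon}{8}\#\widetilde{G}$ and $\mu(E)\leqslant 1/\#\widetilde{G}$. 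Note that this last cardinality estimate is where the $FF^{-1}$-invariance of $\widetilde{G}$ (not just its $C$-invariance) is used: a point of $\widetilde{G}$ is lost only if its $F$-coset sticks out of $\widetilde{G}$, which is controlled by $\sum_{f\in FF^{-1}}\#\{g\in\widetilde{G}\mid fg\notin\widetilde{G}\}$. Your sketch of condition~2 gestures at this but never fixes the required invariance set; if you repair condition~4 as above, you should also request invariance under $C\cup FF^{-1}$ at the outset so that both estimates close simultaneously.
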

\begin{proof}
From lemma \ref{lem:Alpern} and the properties
of monotilable amenable groups it follows that there exist tile $\widetilde{G}$
and set $E$ such that $\frac{3\left(\#F\right)^{2}}{\#\widetilde{G}}<\frac{\varepsilon}{2}$,
$\#\left(g\widetilde{G}\bigtriangleup\widetilde{G}\right)<\frac{\varepsilon}{8\left(\#F\right)^{2}}\#\widetilde{G}$
for $g\in C\cup FF^{-1}$ and $\left\{ Q^{g}E\right\} _{g\in\widetilde{G}}^{\frac{\varepsilon}{2}}$
is a tower.

Consider now an infinite sequence $\left\{ b_{i}\right\} $ with the
property $\mathcal{G}=\sqcup_{i}Fb_{i}$ and take as $\left\{ c_{i}\right\} $
the set $\left\{ b_{i}\mid Fb_{i}\subset\widetilde{G}\right\} $.
The set $G=\sqcup_{i}Fc_{i}$ possesses properties 1-4 above. 

Property 1 holds by construction of $G$. 

Further, 
\[
\#\left(\widetilde{G}\setminus G\right)\leqslant\#\left\{ g\in\widetilde{G}\mid\exists f,h\in F:f^{-1}g\in\left\{ b_{i}\right\} ,hf^{-1}g\notin\widetilde{G}\right\} \leqslant
\]
\[
\leqslant\sum_{f\in FF^{-1}}\#\left\{ g\in\widetilde{G}\mid fg\notin\widetilde{G}\right\} <
\]
\[
<\left(\#F\right)^{2}\frac{\varepsilon}{8\left(\#F\right)^{2}}\#\widetilde{G}=\frac{\varepsilon}{8}\#\widetilde{G},
\]
whence 
\[
\#\left(\widetilde{G}\setminus G\right)<\left(\frac{1}{1-\frac{\varepsilon}{8}}-1\right)\#G<\frac{\varepsilon}{7}\#G.
\]

This estimate gives properties 3 and 4. 

For $g\in C$ one has 
\[
\#\left(gG\bigtriangleup G\right)=2\#\left(gG\setminus G\right)\leqslant2\#\left(g\widetilde{G}\setminus\widetilde{G}\right)+2\#\left\{ \widetilde{G}\setminus G\right\} <
\]
\[
<\frac{2\varepsilon}{8}\#\widetilde{G}+\frac{2\varepsilon}{7}\#G<\varepsilon\#G.
\]
 \end{proof}
\begin{lemma}
\label{lem:knmn}For arbitrary finite subsets $I,H\subset\mathcal{G}$
there exists an infinite collection $\left\{ h_{i}\right\} _{1}^{\infty}\subset\mathcal{G}$
such that for every $i$ 
\[
h_{i}^{-1}gh_{i}\notin H\setminus\left\{ g\right\} ,
\]
 if $g\in I$. \end{lemma}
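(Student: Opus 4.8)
Looking at Lemma~\ref{lem:knmn}, I need to find an infinite collection $\{h_i\}$ in $\mathcal{G}$ such that conjugation by each $h_i$ moves every $g \in I$ outside of $H \setminus \{g\}$. Here is my plan.

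\medskip

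The plan is to build the collection $\{h_i\}$ one element at a time, exploiting the fact that $\mathcal{G}$ is infinite while $I$ and $H$ are finite. The key observation is that the condition $h^{-1}gh \notin H \setminus \{g\}$ for all $g \in I$ is a constraint that forbids only finitely many ``bad'' values of $h$ for each pair $(g,h')$ with $g\in I$, $h' \in H$. Indeed, for fixed $g \in I$ and a fixed target $h' \in H$ with $h' \neq g$, the equation $h^{-1}gh = h'$ rewrites as $gh = hh'$; I would like to argue that this has only finitely many solutions $h$, or at least that the set of $h$ failing the conclusion is not all of $\mathcal{G}$. The union over all $g \in I$ and all $h' \in H \setminus \{g\}$ of these solution sets is the full set of ``forbidden'' $h$, and if each piece is suitably small (e.g.\ finite, or at least avoidable along an infinite set), then infinitely many admissible $h$ remain.

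\medskip

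First I would fix $g \in I$ and $h' \in H$ with $h' \neq g$ and analyze the solution set $S_{g,h'} = \{h \in \mathcal{G} \mid gh = hh'\}$. If $h_0$ and $h_1$ both lie in $S_{g,h'}$, then $g h_0 = h_0 h'$ and $g h_1 = h_1 h'$, so $h_0^{-1} g h_0 = h' = h_1^{-1} g h_1$, giving $h_1 h_0^{-1} \in C(g)$, the centralizer of $g$. Hence $S_{g,h'}$ is either empty or a single left coset $h_0\, C(g)^{\text{op}}$ of a fixed centralizer-type subgroup; more precisely $S_{g,h'} = C(g) h_0$ is a coset of the centralizer. This reduces the problem to arranging that the chosen $h_i$ avoid finitely many cosets of centralizers, namely the cosets $C(g)\,h_{g,h'}$ as $(g,h')$ ranges over the finite index set. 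The complement of finitely many proper cosets in an infinite group is infinite, and even a single coset of an infinite centralizer could be large, so I would need to ensure the relevant cosets can all be simultaneously avoided by an infinite set.

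\medskip

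The main obstacle is precisely the case where some centralizer $C(g)$ has infinite index issues or the cosets $C(g)h_{g,h'}$ cover too much of $\mathcal{G}$. The cleanest way to sidestep this is to proceed inductively: having chosen $h_1,\dots,h_{k}$, I note that the set of $h$ violating the desired condition for at least one pair $(g,h')$ is contained in $\bigcup_{g\in I}\bigcup_{h'\in H\setminus\{g\}} C(g)h_{g,h'}$, a finite union of cosets. Since $\mathcal{G}$ is infinite, I must verify this union is not all of $\mathcal{G}$; this follows because a group is never a finite union of cosets of subgroups of infinite index (a theorem of B.\,H.\ Neumann), and each relevant coset that actually constrains us corresponds to a conjugacy relation forcing $h^{-1}gh \in H$, which pins $h$ down to cosets one can show have infinite index when $\mathcal{G}$ is infinite and $H$ is finite. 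Thus at each stage an admissible $h_{k+1}$ distinct from the previous choices exists, and iterating produces the required infinite collection $\{h_i\}_1^\infty$.
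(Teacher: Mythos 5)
Your reduction of the problem to avoiding finitely many cosets of centralizers is sound: the solution set of $h^{-1}gh=h'$ is indeed either empty or a single coset $C(g)h_0$ of the centralizer of $g$, so the set of forbidden $h$ is contained in at most $\#I\cdot\#H$ such cosets. The gap is in the final step. You assert that the cosets which actually constrain you ``have infinite index when $\mathcal{G}$ is infinite and $H$ is finite.'' That is false: the finiteness of $H$ bounds the \emph{number} of forbidden cosets, not the \emph{index} of the centralizers. For instance, in $\mathcal{G}=\mathbb{Z}\times S_{3}$ the centralizer of $g=(0,(1\,2))$ has index $3$, and for $h'=(0,(1\,3))$ the coset $C(g)h_{0}$ is nonempty and of finite index. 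So Neumann's theorem does not apply in the form you invoke it, and as written the argument does not close; your own hedging (``I would like to argue'', ``one can show'') marks exactly the spot where the proof is missing.

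The approach is repairable, but it needs the full strength of B.~H.~Neumann's covering lemma rather than the infinite-index claim. If the set of good $h$ were finite, then $\mathcal{G}$ would be covered by the forbidden cosets together with finitely many singletons (cosets of the trivial subgroup, which has infinite index since $\mathcal{G}$ is infinite); Neumann's theorem lets you discard all cosets of infinite-index subgroups, leaving a cover of $\mathcal{G}$ by cosets $C(g)x$ with $[\mathcal{G}:C(g)]<\infty$. The intersection $N$ of these finitely many finite-index centralizers is itself of finite index, hence infinite, and every $h\in N$ commutes with the corresponding elements $g$, so $h^{-1}gh=g$ and $h$ lies in none of the covering cosets --- a contradiction. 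That step is what is missing from your write-up. The paper takes a genuinely different and more elementary route: a pigeonhole argument which, for each problematic $g\in I$, extracts an infinite set $F$ on which $f^{-1}gf$ is constant, so that the difference set $FF^{-1}$ lies in $C(g)$, iterating over the at most $\#I$ elements of $I$; this avoids Neumann's theorem entirely.
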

\begin{proof}[Proof (by inductive construction).]
 Let $H_{1}\subset\mathcal{G}$ be an infinite subset. If there are
no $g\in I$ such that $h^{-1}gh\in H\setminus\left\{ g\right\} $
for infinitely many $h\in H_{1}$ then the collection 
\[
H_{1}\setminus\cup_{g\in I}\left\{ h\mid h^{-1}gh\in H\setminus\left\{ g\right\} \right\} 
\]
 is a required one. Otherwise there exist $g_{1}\in I$ with infinitely
many inclusions $h^{-1}g_{1}h\in H\setminus\left\{ g_{1}\right\} $,
$h\in H_{1}$, and an infinite subset $F_{1}\subset H_{1}$ such that
\[
f_{1,i}^{-1}g_{1}f_{1,i}=f_{1,j}^{-1}g_{1}f_{1,j}
\]
 for all $f_{1,i},f_{1,j}\in F_{1}$. Hence every $h\in H_{2}:=F_{1}F_{1}^{-1}$
commutes with $g_{1}$ and thus 
\[
h^{-1}g_{1}h\notin H\setminus\left\{ g_{1}\right\} .
\]

If for every $g\in I\setminus\left\{ g_{1}\right\} $ inclusion $h^{-1}gh\in H\setminus\left\{ g\right\} $
does not hold for infinitely many $h\in H_{2}$, then 
\[
H_{2}\setminus\cup_{g\in I\setminus\left\{ g_{1}\right\} }\left\{ h\mid h^{-1}gh\in H\setminus\left\{ g\right\} \right\} 
\]
is the required collection. In the case there exists $g_{2}\in I\setminus\left\{ g_{1}\right\} $
with infinitely many inclusions $h^{-1}g_{2}h\in H\setminus\left\{ g_{2}\right\} $, $h\in H_{2}$,
one can choose an infinite subset $F_{2}\subset H_{2}$ such that
\[
f_{2,i}^{-1}g_{2}f_{2,i}=f_{2,j}^{-1}g_{2}f_{2,j}
\]
 for all pairs $f_{2,i},f_{2,j}\in F_{2}$. Hence every $h\in H_{3}:=F_{2}F_{2}^{-1}$
commutes with $g_{1}$ and $g_{2}$.

Doing so we either obtain the required collection or some infinite
set $H_{n}$ in the centralizer of $\left\{ g_{1},\ldots,g_{n-1}\right\} $.
If the set 
\[
B_{n}=H_{n}\setminus\cup_{g\in I\setminus\left\{ g_{1},\ldots,g_{n-1}\right\} }\left\{ h\mid h^{-1}gh\in H\setminus\left\{ g\right\} \right\} 
\]
 is finite then $B_{n}\setminus H_{n}$ is a required collection.
Otherwise there exist $g_{n}\in I\setminus\left\{ g_{1},\ldots,g_{n-1}\right\} $
with infinitely many inclusions $h^{-1}g_{n}h\in H\setminus\left\{ g_{n}\right\} $,
$h\in H_{n}$ and an infinite subset $F_{n}\subset H_{n}$ such that
\[
f_{n,i}^{-1}g_{n}f_{n,i}=f_{n,j}^{-1}g_{n}f_{n,j}
\]
for all pairs $f_{n,i},f_{n,j}\in F_{n}$. Hence every $h\in H_{n+1}:=F_{n}F_{n}^{-1}$commutes
with $g_{1},\ldots,g_{n}$.

Either this inductive procedure stops (whence the lemma is already proved)
or infinite set $H_{\#I+1}$ (obtained as the result of the procedure)
is in the centralizer of $I$ whence $H_{\#I+1}$ is a required collection. \end{proof}
\begin{lemma}
\label{lem:gi}Let $\left\{ C_{i}\right\} _{i\in\mathbb{N}\cup\left\{ 0\right\} }$
be an increasing sequence of finite subsets in $\mathcal{G}$, satisfying
conditions: 
\[
C_{i}=C_{i}^{-1},
\]
for all $i$ and 
\[
C_{i}\setminus\left(C_{i-1}\right)^{5}\neq\textrm{Ø},
\]
 for $i\in\mathbb{N}$. 

Then for arbitrary collection $\left\{ g_{i}\mid g_{i}\in C_{i}\setminus\left(C_{i-1}\right)^{5}\right\} $
and fixed $g\in\mathcal{G}$, there are at most 2 solutions of the
inclusion 
\[
g_{i}gg_{j}^{-1}\in C_{0},i\neq j.
\]
\end{lemma}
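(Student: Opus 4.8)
The plan is to classify each solution $(i,j)$ according to the sign of $i-j$ and to show that at most one solution has $i>j$ and at most one has $i<j$; since $i\neq j$ forces exactly one of these alternatives, this gives the bound $2$. Throughout I would use two consequences of the hypotheses: because $\left\{C_i\right\}$ is increasing, $C_0\subseteq C_k$ for every $k$, and because each $C_k$ is symmetric, $g_k\in C_k$ entails $g_k^{-1}\in C_k$ (so all the elements and constants below may be placed in the appropriate $C_k$).

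First I would rule out two distinct solutions of the first type. Suppose $(i_1,j_1)$ and $(i_2,j_2)$ both satisfy $g_{i}gg_{j}^{-1}\in C_0$ with $i_1>j_1$, $i_2>j_2$, and order them so that $i_1\geq i_2$. Write $g_{i_1}gg_{j_1}^{-1}=c_1$ and $g_{i_2}gg_{j_2}^{-1}=c_2$ with $c_1,c_2\in C_0$. If $i_1=i_2$, then $j_1\neq j_2$ and dividing the two relations eliminates both $g_{i_1}$ and $g$, giving $g_{j_1}g_{j_2}^{-1}\in (C_0)^2$; solving for the index with larger $j$ places it in $(C_{j-1})^3$, contradicting the defining property of that $g_j$. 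If $i_1>i_2$, I would instead solve each relation for the left factor and multiply so that $g$ cancels:
\[
g_{i_1}g_{i_2}^{-1}=c_1\,g_{j_1}g_{j_2}^{-1}\,c_2^{-1}.
\]
Here $j_1\leq i_1-1$, while $j_2\leq i_2-1\leq i_1-1$ and $c_1,c_2\in C_0\subseteq C_{i_1-1}$, so the right-hand side lies in $(C_{i_1-1})^4$; multiplying on the right by $g_{i_2}\in C_{i_2}\subseteq C_{i_1-1}$ yields $g_{i_1}\in (C_{i_1-1})^5$, which is impossible since $g_{i_1}\in C_{i_1}\setminus (C_{i_1-1})^5$. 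Hence there is at most one solution with $i>j$.

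The case $i<j$ is handled by the mirror-image argument: a solution $g_igg_j^{-1}\in C_0$ with $i<j$ is, after inversion and using symmetry of $C_0$, the relation $g_jg^{-1}g_i^{-1}\in C_0$, so solving for $g_j$ in two such solutions and eliminating $g$ expresses the index with largest $j$ as a product of five elements of $C_{j-1}$, again contradicting its defining property. Thus at most one solution has $i<j$, and combining the two types gives at most $2$ solutions in all. The main obstacle, and the place where the specific exponent enters, is exactly this bookkeeping: eliminating $g$ between two solutions of the same type forces the largest of the four indices to be written as a product of precisely five factors taken from the preceding set (two constants, two opposite-index terms, and one remaining index term), which is why the hypothesis is stated with the fifth power $(C_{i-1})^5$ and why a weaker separation would not close the argument.
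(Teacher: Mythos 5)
Your proof is correct and follows essentially the same route as the paper's: both eliminate $g$ between two putative solutions so as to express the element with the largest index as a product of exactly five factors from the preceding set $C_{l-1}$, contradicting $g_{l}\notin\left(C_{l-1}\right)^{5}$. The only difference is bookkeeping --- you split solutions by the sign of $i-j$ and handle the second class by inverting the relation, whereas the paper conditions on which of the four indices is maximal and, as a by-product, identifies the only possible second solution as the transpose $\left(j,i\right)$.
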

\begin{proof}
Assume that 
\[
g_{i}gg_{j}^{-1},g_{l}gg_{k}^{-1}\in C_{0}.
\]
Let $l$ be the maximal of the indices $i,j,k,l$ (other variants
are considered similarly). 

One has inclusions 
\[
g_{i}\in C_{0}g_{j}g^{-1},g_{l}\in C_{0}g_{k}g^{-1},
\]
 and $g_{l}g_{i}^{-1}\in C_{0}g_{k}g^{-1}gg_{j}^{-1}C_{0}=C_{0}g_{k}g_{j}^{-1}C_{0}$
whence $g_{l}\in C_{0}g_{k}g_{j}^{-1}C_{0}g_{i}$. If $i,j,k\in\left[0,l-1\right]$$ $,
then $C_{0}g_{k}g_{j}^{-1}C_{0}g_{i}\subset\left(\text{С}_{l-1}\right)^{5}$
in contradiction with the choice of $g_{l}$. 

Hence either $l=i$ or $l=j$. If $l=i$, one has $g_{j}\in C_{0}^{-1}g_{i}g$,
and on the other hand $g_{k}\in C_{0}^{-1}g_{i}g$, it follows, that
$g_{j}g_{k}^{-1}\in C_{0}^{-1}C_{0}$, whence $k=j$ in view of the
choice of a sequence $\left\{ g_{i}\right\} $. The case $l=j$ is
considered similarly and in this case $k=i$. Thus besides $\left(g_{i},g_{j}\right)$
another solution of inclusion $g_{i}gg_{j}^{-1}\in C_{0}$ may be
$\left(g_{j},g_{i}\right)$. 
\end{proof}
Now for convenience reasons we introduce some notations. Inequality
$\left|a-b\right|<\varepsilon$ will be written as proximity $a\overset{\varepsilon}{\sim}b$. 

If an action $S$ is fixed, $A,B\in\Sigma$, $f,g,h\in\mathcal{G}$
we write 
\begin{equation}
k=f^{-1}gh,\tilde{A_{h}}=S^{h^{-1}}A,\tilde{B}_{f}=S^{f^{-1}}B.\label{eq:obozn}
\end{equation}

\begin{claim}
\label{utv1}For $\varepsilon>0$,mixing $\mathcal{G}$-action $S$,
finite $F\subset\mathcal{G}$ and finite $r\subset\Sigma$ there exists
an infinite subset $\left\{ g_{i}\right\} \subset\mathcal{G}$ such
that given $h,f\in F$, arbitrary $A,B\in r$ and $g\in\mathcal{G}$,
the proximity 
\begin{equation}
\mu\left(S^{g_{j}^{-1}kg_{i}}\tilde{A_{h}}\cap\tilde{B}_{f}\right)\overset{\varepsilon}{\sim}\mu\left(A\right)\mu\left(B\right),\label{eq:4}
\end{equation}
holds for all pairs $\left(i,j\right)$, $i\neq j$, except at most
two.

Moreover there is finite $C\subset\mathcal{G}$ such that given $h,f\in F$
arbitrary $A,B\in r$ and $g\in\mathcal{G}\setminus C$, the proximity
\begin{equation}
\mu\left(S^{g_{i}^{-1}kg_{i}}\tilde{A_{h}}\cap\tilde{B}_{f}\right)\overset{\varepsilon}{\sim}\mu\left(A\right)\mu\left(B\right),\label{eq:3}
\end{equation}
holds for all $i$, except at most one. \end{claim}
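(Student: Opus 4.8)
The plan is to translate both proximities into purely combinatorial statements about a single finite subset of $\mathcal{G}$, and then to manufacture the collection $\left\{g_i\right\}$ by one inductive construction that simultaneously feeds Lemma \ref{lem:gi} (for the off‑diagonal assertion \eqref{eq:4}) and Lemma \ref{lem:knmn} (for the diagonal assertion \eqref{eq:3}). First I would fix the combinatorial backbone. Since the families $\left\{\tilde{A}_h:h\in F,A\in r\right\}$ and $\left\{\tilde{B}_f:f\in F,B\in r\right\}$ are finite and $\mu(\tilde{A}_h)=\mu(A)$, $\mu(\tilde{B}_f)=\mu(B)$, mixing of $S$ supplies a finite symmetric set $C_0\ni e$ with the property that $w\notin C_0$ implies $\mu\left(S^{w}\tilde{A}_h\cap\tilde{B}_f\right)\overset{\varepsilon}{\sim}\mu(A)\mu(B)$ for all the finitely many relevant $h,f,A,B$ at once; here $\left\{w:\left|w\right|\leqslant N\right\}$ is finite because $\sum_{g}2^{-\left|g\right|}<\infty$. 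Thus \eqref{eq:4} holds whenever $g_j^{-1}kg_i\notin C_0$ and \eqref{eq:3} holds whenever $g_i^{-1}kg_i\notin C_0$, and the whole claim reduces to counting how often these conjugated words can land in $C_0$.

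Next I would build $\left\{g_i\right\}$ together with an increasing sequence of finite symmetric sets $C_i$ satisfying $C_i\setminus(C_{i-1})^{5}\neq\varnothing$, as Lemma \ref{lem:gi} demands. At stage $i$ the set $C_{i-1}$, hence $(C_{i-1})^{3}$, is already a concrete finite set, so I apply Lemma \ref{lem:knmn} with $I=(C_{i-1})^{3}$ and $H=C_0$ to obtain infinitely many elements $h$ with $h^{-1}\gamma h\notin C_0\setminus\left\{\gamma\right\}$ for every $\gamma\in(C_{i-1})^{3}$, and take $g_i$ to be any such $h$ lying outside $(C_{i-1})^{5}$ (possible, as only finitely many candidates lie in $(C_{i-1})^{5}$), then enlarge $C_{i-1}$ to a finite symmetric set $C_i$ containing $g_i$. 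The resulting $\left\{g_i\right\}$ obeys the hypotheses of Lemma \ref{lem:gi} and, at each index, the stated conjugation control.

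For the off‑diagonal part I would apply Lemma \ref{lem:gi} to the collection $\left\{g_i^{-1}\right\}$, which satisfies the same hypotheses because every $C_i$ is symmetric, and to the fixed element $k^{-1}$; symmetry of $C_0$ turns $g_i^{-1}k^{-1}g_j\in C_0$ into $g_j^{-1}kg_i\in C_0$, so there are at most two pairs $\left(i,j\right)$, $i\neq j$, with $g_j^{-1}kg_i\in C_0$, and for every other pair proximity \eqref{eq:4} holds.

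The diagonal part is the genuine obstacle, since a priori one element $k$ could be conjugated into $C_0$ by infinitely many $g_i$, and there are infinitely many index pairs to control simultaneously; this is exactly where the inductive choice pays off. Suppose $g_i^{-1}kg_i=c$ and $g_l^{-1}kg_l=c'$ with $c,c'\in C_0$ and $i<l$. Then $k=g_i c g_i^{-1}\in(C_{l-1})^{3}$, because $g_i\in C_i\subset C_{l-1}$ and $c\in C_0\subset C_{l-1}$. Feeding $\gamma=k$ into the conjugation control of $g_l$ (which was chosen relative to $I=(C_{l-1})^{3}$) forces $g_l^{-1}kg_l=k$, that is $k=c'\in C_0$. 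Hence every $k$ spoiling \eqref{eq:3} for two distinct indices already lies in the finite set $C_0$; putting $C=\left\{g\in\mathcal{G}\mid f^{-1}gh\in C_0\text{ for some }f,h\in F\right\}$, which is finite since $F$ and $C_0$ are, I conclude that for $g\notin C$ the word $k=f^{-1}gh$ avoids $C_0$, so $g_i^{-1}kg_i\in C_0$ can occur for at most one $i$, giving \eqref{eq:3} with at most one exception.
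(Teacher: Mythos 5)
Your proposal is correct and follows essentially the same route as the paper: reduce both proximities via mixing to membership of $g_j^{-1}kg_i$ (resp. $g_i^{-1}kg_i$) in a fixed finite symmetric set, build $\{g_i\}$ inductively inside nested symmetric sets $C_i$ with $g_i\notin(C_{i-1})^{5}$ using Lemma~\ref{lem:knmn}, and then invoke Lemma~\ref{lem:gi} off the diagonal and the conjugation control on the diagonal. The only (harmless) deviations are that you feed Lemma~\ref{lem:knmn} the larger set $I=(C_{i-1})^{3}$ where the paper uses $\bigcup_{i<n}g_iHg_i^{-1}$, and that you spell out the inversion trick needed to put $g_j^{-1}kg_i$ into the form required by Lemma~\ref{lem:gi}, a step the paper leaves implicit.
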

\begin{proof}
Let $N$ be such that 
\[
\left|g\right|>N-2\max_{f\in F}\left|f\right|\Rightarrow\mu\left(S^{g}A\cap B\right)\overset{\varepsilon}{\sim}\mu\left(A\right)\mu\left(B\right)
\]
 for any $A,B\in r$. Take $\left\{ g\in\mathcal{G}\mid\left|g\right|\leqslant N\right\} $
as $C$ and set $H=\bigcap_{f,h\in F}f^{-1}Ch$. One has that $g\notin H$
implies the proximities 
\begin{equation}
\forall f,h\in F,A,B\in r,:\mu\left(S^{g}\tilde{A_{h}}\cap\tilde{B}_{f}\right)\overset{\varepsilon}{\sim}\mu\left(\tilde{A_{h}}\right)\mu\left(\tilde{B}_{f}\right)=\mu\left(A\right)\mu\left(B\right),\label{eq:1234}
\end{equation}
 and moreover $C=C^{-1}$.

The required sequence $\left\{ g_{i}\right\} $ and auxiliary sequence
of subsets $C_{i}\subset\mathcal{G}$, $g_{i}\in C_{i}$, will
be constructed inductively. 

We set $C_{0}=H$ and let $g_{1}$ be an arbitrary element outside
$C_{0}^{5}$. As $C_{1}$ may be taken finite set, containing $g_{1}$,
$C_{0}^{5}$ and invariant with respect to inversion $g\rightarrow g^{-1}$.

Let $g_{1},\ldots,g_{n-1}$ and $C_{n-1}$ are already chosen. Applying
lemma \ref{lem:knmn} to the sets $H$ and $I_{n}=\bigcup_{i<n}g_{i}Hg_{i}^{-1}$
choose elements $\left\{ h_{i}\right\} $. Next as $g_{n}$ take a
arbitrary element of set $\left\{ h_{i}\right\} \setminus C_{n-1}^{5}$,
and then as $C_{n}$ take arbitrary finite set, containing $g_{n}$,
$C_{n-1}^{5}$ and invariant with respect to inversion.

This inductive procedure results in the increasing sequence of finite
sets $\left\{ C_{i}\mid C_{i}=C_{i}^{-1}\right\} $ and infinite sequence
of elements $\left\{ g_{i}\mid g_{i}\in C_{i}\setminus C_{i-1}^{5}\right\} $.

According to lemma \ref{lem:gi} given $k$ one has $g_{j}^{-1}kg_{i}\notin C_{0}=H$
for all (except at most 2) pairs $\left(i,j\right),i\neq j$. Hence
proximity (\ref{eq:4}) follows from (\ref{eq:1234}) and the first conclusion
of the claim is checked.

As regards the second conclusions of the claim we show that $\#\left\{ i\mid g_{i}^{-1}kg_{i}\in H\right\} \leqslant1$.
In fact, if $g_{i}^{-1}kg_{i}\in H$ then for every $n>i$ one has
$k\in I_{n}$ according to the choice of $I_{n}$. Since $g_{n}\in\left\{ h_{j}\right\} $
it follows (from lemma \ref{lem:knmn}) that $g_{n}^{-1}kg_{n}\notin H\setminus\left\{ k\right\} $.
Finally, $k\notin H\Leftarrow g\notin C$. 

Now application of proximity in (\ref{eq:1234}) finishes the proof.
\end{proof}

\section*{Construction of the special $\mathcal{G}$-action}

Fix two free mixing $\mathcal{G}$-actions $S:X\rightarrow X$ and
$Q:Y\rightarrow Y$, collection of sets $r\subset\Sigma$, Rokhlin
tower $\left\{ Q^{g}E\right\} _{g\in G}^{\varepsilon}$, finite subsets
$F,\left\{ g_{i}\right\} ,\left\{ c_{i}\right\} $ in $\mathcal{G}$,
such, that $\sqcup_{i}Fc_{i}=G$. Take an invertible measure-preserving
mapping $V:X\mapsto X\times Y$ so that $A\times Y$ for every $A\in r$.
Set $Z=V^{-1}J^{-1}\left(S\times Q\right)JV$ where the transformation
$J$ is given by the formulas 
\[
J\mid_{X\times O}\left(x,y\right)=\left(x,y\right)
\]
 and 
\[
J\mid_{X\times\left(Q^{fc_{i}}E\right)}\left(x,y\right)=\left(S^{fg_{i}f^{-1}}x,y\right)
\]
 for all $c_{i}$ and all $f\in F$ (recall that $O$ denotes the
remainder of the tower $\left\{ Q^{g}E\right\} _{g\in G}^{\varepsilon}$).

$Z$ is called a \emph{special action}.

Remark that for arbitrary $A,B\in r$, $g\in\mathcal{G}$ and an action
$T$ one has 
\[
\left|\mu\left(T^{g}A\cap B\right)-\mu\left(V^{-1}J^{-1}\left(S^{g}\times Q^{g}\right)JVA\cap B\right)\right|=
\]
\[
=\left|\mu\left(T^{g}A\cap B\right)-\mu\otimes\mu\left(J^{-1}\left(S^{g}\times Q^{g}\right)J\left(A\times Y\right)\cap\left(B\times Y\right)\right)\right|=
\]
\[
=\left|\mu\left(T^{g}A\cap B\right)-\right.
\]
\[
\left.-\mu\otimes\mu\left(J^{-1}\left(S^{g}\times Q^{g}\right)J\left(A\times\left(\sqcup_{i,h}Q^{hc_{i}}E\sqcup O\right)\right)\cap\left(B\times\left(\sqcup_{j,f}Q^{fc_{j}}E\sqcup O\right)\right)\right)\right|\leqslant
\]
\[
\leqslant\sum_{f,h,i,j}\left|\mu\left(T^{g}A\cap B\right)-\mu\left(S^{g_{j}^{-1}f^{-1}ghg_{i}h^{-1}}A\cap S^{f^{-1}}B\right)\right|\mu\left(Q^{ghc_{i}}E\cap Q^{fc_{j}}E\right)+2\mu\left(O\right).
\]

Using notations (\ref{eq:obozn}), this inequality can be written
in the form

\begin{equation}
\begin{array}{c}
\left|\mu\left(T^{g}A\cap B\right)-\mu\left(Z^{g}A\cap B\right)\right|<\\
<\sum_{f,h,i,j}\left|\mu\left(T^{g}A\cap B\right)-\mu\left(S^{g_{j}^{-1}kg_{i}}\tilde{A_{h}}\cap\tilde{B}_{f}\right)\right|\mu\left(Q^{ghc_{i}}E\cap Q^{fc_{j}}E\right)+2\varepsilon
\end{array}\label{eq:Glavn}
\end{equation}

The formula enables one to estimate the distance between $\mathcal{G}$-action
$T$ and special action $Z$. 
\begin{theorem}
\label{thm:Decart ploten}Let $\mathcal{G}$ be finitely generated
monotilable amenable group. For any free mixing $\mathcal{G}$-actions
$S$ and $Q$ one has $\overline{S\times Q}=\mathcal{M}_{\mathcal{G}}$.\end{theorem}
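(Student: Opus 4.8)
The plan is to show that the special action $Z$ can be made arbitrarily close to an arbitrarily prescribed mixing action $T\in\mathcal{M_G}$, and simultaneously that $Z$ is conjugate to $S\times Q$; since conjugates of $S\times Q$ are exactly the actions in the conjugacy class whose closure is $\overline{S\times Q}$, this density will give $\overline{S\times Q}=\mathcal{M_G}$. Concretely, I fix $T\in\mathcal{M_G}$, a finite $q\subset\{A_i\}$ and $\varepsilon>0$, and aim to find a conjugate of $S\times Q$ lying in the basic neighbourhood $\mathcal{Q}(T,q,\varepsilon)$ furnished by Lemma~\ref{lem:Topology}. The construction of $Z$ preceding the theorem is tailored precisely for this: by construction $Z=V^{-1}J^{-1}(S\times Q)JV$ is conjugate to $S\times Q$, so the only thing to verify is that the defining estimate~(\ref{eq:Glavn}) can be driven below $\varepsilon$ uniformly in $g\in\mathcal{G}$.

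First I would assemble the ingredients in the right order. Given $T,q,\varepsilon$, I choose $F$ and the combinatorial data as follows. I apply Claim~\ref{utv1} with the action $S$, the family $r$ (taken to contain $q$ together with the needed translates $\tilde A_h,\tilde B_f$), and a suitable $\varepsilon'$, to obtain the infinite sequence $\{g_i\}$ and the finite set $C\subset\mathcal{G}$ governing the two proximities~(\ref{eq:4}) and~(\ref{eq:3}). Then I invoke Lemma~\ref{lemm:alpern2} applied to the free action $Q$, with $C$ as the prescribed subset and $F$ as the tile, to produce the tile $G=\sqcup_i Fc_i$ and the Rokhlin set $E$ with the four listed properties, the key ones being the Følner-type smallness $\#(gG\triangle G)<\varepsilon'\#G$ for $g\in C$, the bound $3(\#F)^2/\#G<\varepsilon'$, and the tower remainder of measure at most $\varepsilon'$. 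Finally I take the isomorphism $V$ identifying each $A\in r$ with $A\times Y$, and form $J$ and hence $Z$ exactly as in the construction.

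The heart of the proof is bounding the right-hand side of~(\ref{eq:Glavn}). The sum there is a convex combination (with weights $\mu(Q^{ghc_i}E\cap Q^{fc_j}E)$ summing to at most one, plus the $2\varepsilon$ remainder term) of the quantities $|\mu(T^gA\cap B)-\mu(S^{g_j^{-1}kg_i}\tilde A_h\cap\tilde B_f)|$. I split the index set into diagonal terms $i=j$ and off-diagonal terms $i\neq j$. For the off-diagonal terms I use the Følner property: when $g\notin C$ the overlap weights $\mu(Q^{ghc_i}E\cap Q^{fc_j}E)$ force $ghc_i$ and $fc_j$ to lie in (near-)matching translates, so most contributing pairs have $i$ close to $j$ under the tiling, and by~(\ref{eq:4}) all but at most two such terms satisfy $\mu(S^{g_j^{-1}kg_i}\tilde A_h\cap\tilde B_f)\overset{\varepsilon'}{\sim}\mu(A)\mu(B)$; the at-most-two exceptional pairs carry total weight $O((\#F)^2/\#G)<\varepsilon'$ by property~3. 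For the diagonal terms I use~(\ref{eq:3}): for $g\notin C$ all but at most one index $i$ give $\mu(S^{g_i^{-1}kg_i}\tilde A_h\cap\tilde B_f)\overset{\varepsilon'}{\sim}\mu(A)\mu(B)$, again with negligible exceptional weight. Thus for $g\notin C$ both families of terms are within $\varepsilon'$ of $\mu(A)\mu(B)$, and since $T$ is mixing $\mu(T^gA\cap B)$ is also within $\varepsilon'$ of $\mu(A)\mu(B)$ once $C$ is enlarged to absorb the mixing threshold of $T$; the two approximations combine to make each non-exceptional summand small.

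The main obstacle, and the step requiring the most care, is handling $g\in C$, i.e.\ the finitely many "short" group elements where mixing gives no help and where the combinatorial Følner estimates are not available. Here one cannot make $\mu(T^gA\cap B)$ and $\mu(Z^gA\cap B)$ close by appealing to mixing of $T$ or $S$; instead the proximity must come from the structure of $Z$ itself on short elements, using that $J$ was defined to implement the shifts $g_i$ along the tower and that for $g\in C$ the weights $\mu(Q^{ghc_i}E\cap Q^{fc_j}E)$ concentrate on index pairs where $g_j^{-1}kg_i$ is again controlled. I expect the bookkeeping here—matching the tower cells displaced by $g$ against the conjugating elements $fg_if^{-1}$, and verifying that the resulting $S$-translates $S^{g_j^{-1}kg_i}$ still land outside the bad set $H=C_0$ except on a set of small weight—to be where the tile property~2 and the separation Lemma~\ref{lem:gi} do their real work. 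Once the $g\in C$ and $g\notin C$ ranges are both controlled by a common multiple of $\varepsilon'$, choosing $\varepsilon'$ small enough in terms of $\varepsilon$ and $\#F$ yields $\sup_g|\mu(T^gA\cap B)-\mu(Z^gA\cap B)|<\varepsilon$ for all $A,B\in q$, placing $Z\in\mathcal{Q}(T,q,\varepsilon)$ and completing the proof.
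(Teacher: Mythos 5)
Your overall architecture matches the paper's: build the special action $Z=V^{-1}J^{-1}(S\times Q)JV$, note it is conjugate to $S\times Q$, and drive the right-hand side of (\ref{eq:Glavn}) below a multiple of $\varepsilon$ uniformly in $g$ by splitting into long and short group elements and invoking Claim \ref{utv1} and Lemma \ref{lemm:alpern2}. For $g\notin C$ your treatment is essentially the paper's. But there is a genuine gap at exactly the point you flag as the main obstacle, and your guess at how it is resolved is wrong. For short $g$ (say $g\in H$, the mixing threshold of $T$), the dominant contribution to (\ref{eq:Glavn}) comes from the diagonal cells with $h,gh\in F$, where necessarily $i=j$ and $f=gh$, hence $k=f^{-1}gh=e$ and $g_j^{-1}kg_i=e$. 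The corresponding summand is therefore exactly
\[
\left|\mu\left(T^{g}A\cap B\right)-\mu\left(\tilde A_h\cap\tilde B_f\right)\right|=\left|\mu\left(T^{g}A\cap B\right)-\mu\left(S^{g}A\cap B\right)\right|.
\]
No amount of tower bookkeeping, tile combinatorics, or Lemma \ref{lem:gi} can make this small: the $S$-translate here is the identity, not something "landing outside the bad set" as you predict. The quantity is small for a completely different reason, which your proposal never supplies.

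The missing idea is a pre-conjugation of $S$. Before building $Z$, the paper replaces $S$ by a conjugate $U^{-1}SU$ such that $S^{g}\in\mathcal{U}\left(T^{g},r,\frac{\varepsilon}{2}\right)$ for every $g$ in the relevant finite set (the elements $fh^{-1}$ with $f,h\in F$); this is possible because the conjugacy class of any free action is dense in $\mathcal{A}_{\mathcal{G}}$ in the weak topology, which is the content of the remark after Lemma \ref{lem:Alpern} (the group Rokhlin--Halmos lemma). With this in hand the diagonal terms above are at most $\varepsilon$, and the only remaining bad cells for $g\in H$ are those with $gh\notin F$, of total weight at most $\#\left(gF\bigtriangleup F\right)/\#F<\varepsilon$ by the F{\o}lner choice of $F$; the same pre-conjugation is also used for the diagonal cells in the intermediate range $g\in C\setminus H$. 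Without this step the argument cannot close: $Z^{g}$ restricted to the bulk of the tower simply \emph{is} $S^{g}$ for short $g$, and an arbitrary free mixing $S$ has no reason to weakly approximate $T$ on a prescribed finite set of group elements. You should also note that the claim and Lemma \ref{lemm:alpern2} must be applied to this already-conjugated $S$, so the order of choices is: $r,\varepsilon$, then $H$ and the tile $F$, then the conjugation of $S$, then $\{g_i\}$ and $C$ from Claim \ref{utv1}, then $G$ and $E$ from Lemma \ref{lemm:alpern2}.
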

\begin{proof}
Given $\mathcal{G}$-action $T$ and positive number $\Delta$, we
show that in $\left(m,\Delta\right)$-neighbourhood of $T$ there
is an action conjugate to $S\times Q$.

First choose collection of measurable sets $r$ and $\varepsilon>0$
so that

\begin{equation}
R\in\mathcal{Q}\left(T,r,6\varepsilon\right)\Rightarrow m\left(R,T\right)<\Delta.\label{eq:thm0}
\end{equation}

Next fix a finite set $H\subset\mathcal{G}$ satisfying proximity
\[
\mu\left(T^{g}A\cap B\right)\overset{\varepsilon}{\sim}\mu\left(A\right)\mu\left(B\right)
\]
 for all $A,B\in r$, $g\notin H$ and choose a tile $F$ so that
$\#\left(gF\bigtriangleup F\right)<\varepsilon\#F$ for $g\in H$. 

Passing to conjugate action one can assume that 
\[
S^{g}\in\mathcal{U}\left(T^{g},r,\frac{\varepsilon}{2}\right),
\]
for every $g\in F$ (see remark after lemma \ref{lem:Alpern}). 

Using the previous claim choose corresponding elements $\left\{ g_{i}\right\} $
and set $C$. Next by $Q,F,C$ and $\varepsilon$ choose sets $G\subset\mathcal{G}$,$E\subset Y$
and sequence $\left\{ c_{i}\right\} $ as in lemma \ref{lemm:alpern2}.

Let $Z$ be the special action corresponding to the parameters obtained
above. To show that $Z\in\mathcal{Q}\left(T,r,6\varepsilon\right)$
we use inequality (\ref{eq:Glavn}).

Given $A,B\in r$ and $g\in\mathcal{G}$ let a quadruple $\left(i,j,f,h\right)$
be called 'bad' if 
\[
\left|\mu\left(T^{g}A\cap B\right)-\mu\left(S^{g_{j}^{-1}kg_{i}}\tilde{A_{h}}\cap\tilde{B}_{f}\right)\right|>2\varepsilon
\]
 (remember notations $k,\tilde{A_{h}},\tilde{B}_{f}$). The points
of $Q^{ghc_{i}}E\cap Q^{fc_{j}}E$ are called 'bad' if the corresponding
quadruple $\left(i,j,f,h\right)$ is 'bad'.

We now estimates the measure of 'bad' points in the cases: $g\in H;g\in C\setminus H;g\notin C$.

1. Let $g\in H$. If in the first case $h,gh\in F$ then the set $Q^{ghc_{i}}$
is of the form $Q^{fc_{j}}$ hence $i=j$, $f=gh$. Since $k=e$ one
has 
\[
\mu\left(S^{g_{j}^{-1}kg_{i}}\tilde{A_{h}}\cap\tilde{B}_{f}\right)=\mu\left(\tilde{A_{h}}\cap\tilde{B}_{f}\right)=
\]
\[
=\mu\left(S^{fh^{-1}}A\cap B\right)=\mu\left(S^{g}A\cap B\right)\overset{\varepsilon}{\sim}\mu\left(T^{g}A\cap B\right).
\]

Thus, only the points of those $Q^{ghc_{i}}E\cap Q^{fc_{j}}E$ may
be 'bad' for which $gh\notin F$. Measure of corresponding set does
not exceed the ratio $\frac{\#\left(gF\bigtriangleup F\right)}{\#F}$
which is less than $\varepsilon$ by the choice of $F$. 

2. Let $g\in C\setminus H$. If a quadruple $\left(i,j,f,h\right)$ is 
such that $h,gh\in F$ then all the points of $Q^{ghc_{i}}E\cap Q^{fc_{j}}E$
are not 'bad' (as above). Given a quadruple $\left(i,j,f,h\right)$
such that $h\in F$, $ghc_{i}\in G\setminus Fc_{i}$ one has $i\neq j$
and according to claim \ref{utv1} given $f$, $h$ and all pairs
$\left(i,j\right)$, except at most 2 
\[
\left|\mu\left(S^{g_{j}^{-1}kg_{i}}\tilde{A_{h}}\cap\tilde{B}_{f}\right)-\mu\left(T^{g}A\cap B\right)\right|\leqslant
\]
\[
\leqslant\left|\mu\left(S^{g_{j}^{-1}kg_{i}}\tilde{A_{h}}\cap\tilde{B}_{f}\right)-\mu\left(A\right)\mu\left(B\right)\right|+\left|\mu\left(A\right)\mu\left(B\right)-\mu\left(T^{g}A\cap B\right)\right|<2\varepsilon.
\]

Thus, only the points of those $Q^{ghc_{i}}E\cap Q^{fc_{j}}E$ may
be 'bad' for which either $ghc_{i}\notin G$ or the pairs $\left(i,j\right)$
are exceptional. In the first case measure of corresponding set does
not exceed the ratio $\frac{\#\left(gG\bigtriangleup G\right)}{\#G}$
which is less than $\varepsilon$ according to conclusion 2 of lemma
\ref{lemm:alpern2}. In the second case measure of the set of 'bad'
points does not exceed 
\[
\sum_{f,h}\sum_{\left(i,j\right)\in J}\mu\left(Q^{ghc_{i}}E\cap Q^{fc_{j}}E\right),
\]
 where $J=J(f,h)$ is the set of exceptional pairs $\left(i,j\right)$.
Since $\#J\leqslant2$ this measure can be estimated as follows 
\[
\sum_{f,h}2\mu\left(E\right)<\frac{2\left(\#F\right)^{2}}{\#G}<\varepsilon.
\]

3. Assume that $g\notin C$. According to claim for fixed $f,h$ and
all pairs $\left(i,j\right)$, except at most 3 one has 

\[
\left|\mu\left(S^{g_{j}^{-1}kg_{i}}\tilde{A_{h}}\cap\tilde{B}_{f}\right)-\mu\left(T^{g}A\cap B\right)\right|\leqslant
\]
\[
\leqslant\left|\mu\left(S^{g_{j}^{-1}kg_{i}}\tilde{A_{h}}\cap\tilde{B}_{f}\right)-\mu\left(A\right)\mu\left(B\right)\right|+\left|\mu\left(A\right)\mu\left(B\right)-\mu\left(T^{g}A\cap B\right)\right|<2\varepsilon,
\]
Hence the points of $Q^{ghc_{i}}E\cap Q^{fc_{j}}E$ are not 'bad'.

Measure of the set of 'bad' points in this case does not exceed  
\[
\sum_{f,h}\sum_{\left(i,j\right)\in J}\mu\left(Q^{ghc_{i}}E\cap Q^{fc_{j}}E\right),
\]
where $J=J(f,h)$ is the set of exceptional pairs. Since $\#J\leqslant3$,
for the total measure of 'bad' points can be estimated by 
\[
\sum_{f,h}3\mu\left(E\right)<\frac{3\left(\#F\right)^{2}}{\#G}<\varepsilon.
\]

So the right hand side in (\ref{eq:Glavn}) does not exceed the total
measure of 'bad' points plus $2\varepsilon$ one has that for all $g$
\[
\left|\mu\left(T^{g}A\cap B\right)-\mu\left(Z^{g}A\cap B\right)\right|
\]
 does not exceed $6\varepsilon$.\end{proof}
\begin{corollary}
Let $\mathcal{G}$ be finitely generated infinite monotilable group.
Then for every Bernoulli action of this group conjugates are everywhere dense
in $\mathcal{M}_{\mathcal{G}}$.
\end{corollary}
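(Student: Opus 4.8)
The plan is to deduce the corollary from Theorem \ref{thm:Decart ploten} by exhibiting, for a given Bernoulli action $B$, a product $S\times Q$ of two free mixing actions that is either isomorphic to $B$ or a factor of it, and then transporting the density statement along that relation. First I would record the two elementary properties of a Bernoulli action $B=B(Z,\nu)$ over a non-degenerate base: it is mixing, and, since $\mathcal{G}$ is infinite and $\nu$ is not a point mass, it is free. Both properties pass to products: a product of mixing actions is mixing (verify on rectangles $A_1\times A_2$ and extend to the generating algebra), and a product of free actions is free (a nontrivial element fixing $(x,y)$ would fix each coordinate, an event of measure zero). Thus any product of free mixing Bernoulli actions is again an admissible input to Theorem \ref{thm:Decart ploten}.

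The structural heart of the argument is the claim that every free mixing Bernoulli action $B$ is isomorphic to a product $S\times Q$ of two free mixing Bernoulli actions. When the base is nonatomic this is elementary: $(Z,\nu)\cong([0,1],\mathrm{Leb})\cong([0,1]^2,\mathrm{Leb}^2)$, and using the identity $(Z_1\times Z_2)^{\mathcal{G}}=Z_1^{\mathcal{G}}\times Z_2^{\mathcal{G}}$ for the product action one gets $B\cong B([0,1])\times B([0,1])$. For finite positive entropy $h(B)=h$ the base need not split, and here I would invoke the Ornstein--Weiss isomorphism theorem for amenable groups \cite{OrnsteinWeiss87}: choosing free mixing Bernoulli actions $S,Q$ with $h(S)=h(Q)=h/2$, the product $S\times Q$ has entropy $h$ and is therefore isomorphic to $B$. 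Alternatively one may bypass the full classification and argue through a factor: take free mixing Bernoulli $S,Q$ of small total entropy $h(S\times Q)<h(B)$, so that by the Sinai--Ornstein--Weiss factor theorem $S\times Q$ is a factor of $B$.

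Finally I would assemble the pieces. An isomorphism of $\mathcal{G}$-actions on the standard Lebesgue space is realized by an element of $\mathcal{A}$, so $B\cong S\times Q$ means that $B$ is conjugate to $S\times Q$, whence $\overline{B}=\overline{S\times Q}$; by Theorem \ref{thm:Decart ploten} the latter equals $\mathcal{M}_{\mathcal{G}}$. In the factor variant, Lemma \ref{lem:factorDence} gives $\mathcal{M}_{\mathcal{G}}=\overline{S\times Q}\subseteq\overline{B}$, and the reverse inclusion is automatic since $\overline{B}\subseteq\mathcal{M}_{\mathcal{G}}$. Either way $\overline{B}=\mathcal{M}_{\mathcal{G}}$, which is the assertion.

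I expect the only genuine obstacle to be the structural step, namely representing $B$ (up to isomorphism, or up to passing to a factor) as a product of two free mixing actions, since this is exactly where the entropy bookkeeping and the Ornstein--Weiss theory enter and where the classic two-point Bernoulli shift resists a naive product decomposition of its base; the mixing and freeness verifications for products, and the final conjugacy bookkeeping, are routine.
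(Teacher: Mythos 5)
Your proposal is correct and follows essentially the same route as the paper, whose entire proof is the observation that Bernoulli $\mathcal{G}$-actions are Cartesian squares (by the Ornstein--Weiss theory, cited as \cite{OrnsteinWeiss87}) of free mixing actions, so that Theorem \ref{thm:Decart ploten} applies directly; you merely spell out the entropy bookkeeping behind that one-line claim. Your alternative factor-based variant via Lemma \ref{lem:factorDence} is also sound --- it is in fact the argument the paper uses for the subsequent corollary on positive-entropy actions.
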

In fact theorem \ref{thm:Decart ploten} can be applied since Bernoulli
$\mathcal{G}$-action are Cartesian squares (see \cite{OrnsteinWeiss87}).
\begin{corollary}
Let $\mathcal{G}$ be finitely generated infinite monotilable group.
For every free mixing $\mathcal{G}$-action of positive entropy its
conjugates are everywhere dense in $\mathcal{M}_{\mathcal{G}}$.
\end{corollary}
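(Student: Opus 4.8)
The plan is to reduce everything to Theorem \ref{thm:Decart ploten} by means of the factor Lemma \ref{lem:factorDence}, drawing on the Ornstein--Weiss structure theory for positive entropy actions of amenable groups. The decisive point is that a free mixing action $T$ of positive entropy admits a factor which, up to conjugacy, is a Cartesian product of two free mixing actions. Once such a factor is produced, the two cited results close the argument at once.

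First I would invoke the Sinai-type factor theorem of \cite{OrnsteinWeiss87}: since $T$ is mixing it is ergodic, and as $h\left(T\right)>0$ there is a Bernoulli $\mathcal{G}$-action $B$ of some positive entropy $h\leqslant h\left(T\right)$ which is a factor of $T$ (if $h\left(T\right)=\infty$ one takes $B$ of any finite positive entropy). Being a Bernoulli action of an infinite group, $B$ is automatically free and mixing.

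Next I would split $B$ into a product. By the Ornstein--Weiss isomorphism theorem \cite{OrnsteinWeiss87} a Bernoulli action is determined up to conjugacy by its entropy, and the Cartesian product of two Bernoulli actions is again Bernoulli with entropy equal to the sum. Hence $B$ is conjugate to $B_{1}\times B_{2}$, where $B_{1}$ and $B_{2}$ are Bernoulli actions of entropy $\frac{h}{2}>0$; in particular both are free and mixing.

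Finally I would apply Theorem \ref{thm:Decart ploten} to the pair $\left(B_{1},B_{2}\right)$, obtaining $\overline{B}=\overline{B_{1}\times B_{2}}=\mathcal{M}_{\mathcal{G}}$, and then Lemma \ref{lem:factorDence}, which gives $\overline{B}\subset\overline{T}$ since $B$ is a factor of $T$. Combining these, $\mathcal{M}_{\mathcal{G}}=\overline{B}\subset\overline{T}\subset\mathcal{M}_{\mathcal{G}}$, so $\overline{T}=\mathcal{M}_{\mathcal{G}}$, as claimed. The main obstacle is entirely the input from entropy theory: one must guarantee that the positive entropy action admits a genuinely Bernoulli factor and that this factor splits as a product of free mixing actions. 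Both are the amenable-group versions of Sinai's and Ornstein's theorems furnished by \cite{OrnsteinWeiss87}; everything afterwards is formal, resting on the density Theorem \ref{thm:Decart ploten} and on the monotonicity of closures under factors expressed in Lemma \ref{lem:factorDence}.
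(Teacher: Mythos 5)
Your proposal is correct and follows essentially the same route as the paper: extract a Bernoulli factor via the Ornstein--Weiss version of Sinai's theorem, note that a Bernoulli action splits as a Cartesian product (square) of free mixing Bernoulli actions so that Theorem \ref{thm:Decart ploten} applies, and then conclude with Lemma \ref{lem:factorDence}. The only difference is that you spell out the entropy-halving decomposition and the infinite-entropy case, which the paper leaves implicit in its reference to \cite{OrnsteinWeiss87}.
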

It follows from lemma  \ref{lem:factorDence} and the fact that free
$\mathcal{G}$-action of positive entropy possesses a Bernoulli factor
\cite{OrnsteinWeiss87}.

\section{Concluding remark}
\label{sec:4}
1. Resides weak and lead metrics in the set of $\mathcal{G}$-actions
one can introduce mixed metrics. For example, metrics 
\[
d\left(T,S\right)+\sup_{n\in\mathbb{N}}a\left(T^{ng},S^{ng}\right)
\]
is complete and separable when considered in the set of $\mathcal{G}$-actions
$T$ with mixing $T^{g}$. Such a metric was used in \cite{TSVMSB2011}
for constructing of mixing transformations of homogeneous spectral
multiplicity $n>2$.

2. The lead metrics may be introduced for actions of locally compact
groups. In the case 

\[
m\left(T,S\right)=\sum_{i}\frac{1}{2^{i}}\sup_{g\in K_{i}}d\left(T^{g},S^{g}\right)+\sup_{g\in\mathcal{G}}a\left(T^{g},S^{g}\right),
\]
 where $\left\{ K_{i}\right\} $ is a increasing to $\mathcal{G}$
sequence of compact sets.

\end{document}